\documentclass[letter, 12pt]{article}

\usepackage[margin=1in]{geometry} 

\usepackage[utf8]{inputenc}
\usepackage{hyperref}
\hypersetup{
	unicode,
	linkcolor=blue, 
	pdfproducer={LaTeX},
	pdfcreator={pdflatex}
}

\usepackage{amsmath}
\usepackage{colonequals}
\usepackage{amsthm}
\usepackage{amssymb}
\usepackage{amsfonts}
\usepackage{amsxtra}   
\usepackage{amsrefs}
\usepackage{epsfig}
\usepackage{verbatim}
\usepackage{hyperref}
\hypersetup{
    colorlinks=true,
    linkcolor=blue,
    filecolor=magenta,      
    urlcolor=cyan,
}
\usepackage{enumitem}
\usepackage{tikz}
\usepackage{tikz-cd}
\usepackage{graphics}
\usepackage{graphicx, color}
\graphicspath{{fig/}}
\usepackage[all]{xy}
\usetikzlibrary{arrows,decorations.pathmorphing,decorations.pathreplacing,positioning,shapes.geometric,shapes.misc,decorations.markings,decorations.fractals,calc,patterns}
\usepackage[lined,boxed,commentsnumbered,ruled,linesnumbered]{algorithm2e}
\usepackage{float}

\usepackage{mathrsfs} 

\usepackage{lipsum}

\usepackage{booktabs} \usepackage[table,xcdraw]{xcolor}

\theoremstyle{plain}
\newtheorem{Theorem}{Theorem}
\newtheorem{Proposition}[Theorem]{Proposition}
\newtheorem{Corollary}[Theorem]{Corollary}
\newtheorem{Lemma}[Theorem]{Lemma}

\theoremstyle{definition}
\newtheorem{Definition}[Theorem]{Definition}

\newtheorem{Example}[Theorem]{Example}

\theoremstyle{remark}
\newtheorem{Remark}[Theorem]{Remark}

\usepackage[sort&compress,numbers,square]{natbib}
\newcommand{\newword}[1]{\textbf{\emph{#1}}}

\DeclareMathOperator{\rk}{rk}

\newcommand{\CC}{\mathbb{C}}

\newcommand{\KK}{\mathbb{K}}

\newcommand{\ZZ}{\mathbb{Z}}

\DeclareMathOperator{\GL}{GL}

\newcommand{\mm}{\mathfrak{m}}

\usepackage{bm}
\newcommand*{\allbf}[1]{\ifmmode\bm{#1}\else\textbf{#1}\fi}

\title{An algorithm for invariants of elementary abelian groups}

\author{Sasha Arasha$^1$, Marcus Cassell$^1$, Mal Dolorfino$^2$, Francesca Gandini$^1$, \\ Gordie Novak$^1$, Daniel Qin$^3$, Sumner Strom$^1$.}

\date{\footnotesize{
	$^1$St. Olaf College \\ Northfield, Minnesota, USA \\
    $^2$University of Washington \\ Seattle, Washington, USA \\
    $^3$University of California \\ Davis, California, USA \\}
}

\begin{document}
	\maketitle

\begin{abstract}
When we consider a finite abelian group acting linearly on a polynomial ring, we can find monomial generators for the subring of invariants. By Noether’s degree bound and Hilbert's finiteness theorem we know that there are finitely many minimal generators, but efficiently finding a generating set is not a trivial task. We present a new algorithm for computing the invariant ring for elementary abelian groups acting on polynomial rings with complex coefficients (or any other field of characteristic zero). We follow a two-step process: first we generate a collection of $n-k$ ``seed” invariants by calculating the kernel of a weight matrix that encodes our action. After we find the seeds, we ``grow” them into a generating set for the invariant ring by exploiting the lattice structure of invariants modulo $p$. Our algorithm performs better than the one currently available in \texttt{Macaulay2}, allowing us to compute more quickly invariants in this setting.

\medskip

\noindent\textbf{Keywords:} monomial invariants, diagonal linear action, elementary abelian groups, macaulay2.

\medskip

\noindent \textbf{MSC:} MSC2020 code 13A50.

\end{abstract}


\section{Introduction}~\label{sec:intro}

Let $G$ be an abelian group acting on a polynomial ring $\CC[x_1,\ldots,x_n]$ with complex coefficients. A polynomial $f\in\CC[x_1,\ldots,x_n]$ is \newword{invariant} under the action of $G$ if for all $g\in G$, $g\cdot f = f$. The set of invariant polynomials form a finitely generated $\CC-$algebra called the \emph{invariant ring}. Even though there are infinitely many possible generating sets for the invariant ring, by Noether's degree bound \cite{em}  we can find a finite minimal generating set of invariants of degree less than or equal to $|G|$, the order of $G$. Moreover, since we restrict our attention to finite abelian groups, after a change of basis every action is diagonal and we can find generating sets consisting of only monomials. We study how to find minimal monomial generating sets of the invariant ring arising from the action of a finite abelian group.

 In the case that $G$ is an elementary abelian $p$-group, we provide an effective algorithm to compute these generating sets. More precisely, for groups of the form $(\ZZ_p)^k$, we find an alternative to the algorithm by Derksen \cite{inv} and Gandini \cite{thesis} provided by a new two-step algorithm for generating a starting collection of invariant monomials (the \textit{seed invariants}) and then extending those (\textit{grow} them) into a full minimal generating set of invariants as discussed in Section~\ref{sec:results}. 

\begin{Theorem}\label{thm:main thm}
Let a finite abelian group $G$ isomorphic to $(\ZZ_p)^k$ act on some polynomial ring $R=\CC[x_1, x_2, \ldots, x_n]$ with $k<n$.  Then the entire minimal generating set for $R^G$ can be constructed using $n-k$ seed invariants.
\end{Theorem}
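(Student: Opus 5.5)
We read the statement as follows. Encode the diagonal action by a $k\times n$ weight matrix $W$ over $\ZZ_p$: the $j$-th generator of $(\ZZ_p)^k$ multiplies $x_i$ by $\zeta^{W_{ji}}$, where $\zeta$ is a primitive $p$-th root of unity. A monomial $x^a$ is then invariant if and only if $Wa\equiv 0 \pmod p$. So the exponent vectors of invariant monomials form the lattice
\[
L=\{a\in\ZZ^n : Wa\equiv 0 \bmod p\}\cap \ZZ_{\ge 0}^n .
\]
The minimal monomial generating set of $R^G$ is exactly the Hilbert basis of this monoid: its elements are the nonzero vectors of $L$ that cannot be written as a sum of two nonzero vectors of $L$.

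\textbf{Step 1 (seeds).} We may change generators of $G$ and reorder variables without changing $R^G$. Without loss of generality the action is faithful, so $W$ has rank $k$ over $\ZZ_p$; if it is not, we replace $G$ by its image, which is again elementary abelian. Row reducing, $W=[I_k\mid B]$ up to a column permutation. The kernel of $W$ over $\ZZ_p$ has dimension $n-k$, with basis vectors $v_j=e_{k+j}-\sum_i B_{ij}e_i$ for $j=1,\dots,n-k$. Lift each $v_j$ to its representative in $\{0,\dots,p-1\}^n$; this gives $n-k$ monomials $s_j=x^{v_j}$. Each $s_j$ involves exactly one of the free variables $x_{k+1},\dots,x_n$, with exponent $1$, and these are the seed invariants.

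\textbf{Step 2 (growing).} Every $a\in L$ reduces mod $p$ to a kernel vector, hence to a unique $\ZZ_p$-combination $\sum c_j v_j$, where $c_j$ is simply $a_{k+j} \bmod p$. Writing $\bar a$ for the representative of $a$ in $\{0,\dots,p-1\}^n$, we get $\bar a=\overline{\sum c_j v_j}$. Moreover $a-\bar a\in p\ZZ_{\ge0}^n$, and each $x_i^p$ is invariant. So every invariant monomial factors as a product of powers $x_i^p$ times the monomial $x^{\bar a}$ attached to a kernel element. Hence $R^G$ is generated by the $p^{n-k}$ reduced monomials $m_c=x^{\overline{\sum c_jv_j}}$, for $c\in\ZZ_p^{n-k}$, together with $x_1^p,\dots,x_n^p$. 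All of these are built from the seeds by the growing operation: take $\ZZ_p$-linear combinations of the seed exponent vectors and reduce coordinates mod $p$. The pure powers $x_{k+j}^p$ arise as $p\,v_j$ before reduction, and $x_i^p$ for $i\le k$ arise as the reduced vector $p e_i$; if necessary we adopt the convention that these are always included. Finally, we extract a minimal subset by discarding any monomial that is divisible by, and whose quotient is, a product of others. The result is the unique minimal monomial generating set, since for a positive affine monoid the Hilbert basis is unique.

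\textbf{Main obstacle.} The delicate point is the claim that the reduced representatives together with the $x_i^p$ already contain the whole Hilbert basis. We must check that any irreducible $a\in L$ is either some $p e_i$ or has all coordinates $<p$. If some coordinate $a_i\ge p$ and $a\ne pe_i$, then $a=pe_i+(a-pe_i)$ with both summands nonzero elements of $L$, contradicting irreducibility. So irreducibles lie in the box $\{0,\dots,p-1\}^n$ apart from the $pe_i$. They are therefore among the $m_c$, because within the box the residue class determines $\bar a$ uniquely. This shows the construction is exhaustive, and the theorem follows.
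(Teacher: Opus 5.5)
Your proof is correct, and its skeleton is the paper's. In both, $x^a$ is invariant iff $Wa\equiv 0 \pmod p$ (Lemma~\ref{lem: inv=ker}), and every invariant exponent splits as $\bar a+p\beta$ with $\bar a\in[0,p-1]^n$ and $\beta\ge 0$ (Proposition~\ref{prop:p-piping}). The pure powers are adjoined by hand.

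The genuine difference is how the seeds are built. The paper takes a nonzero maximal minor $W_T$ of the \emph{integer} matrix, writes down the Pl\"ucker/Cramer vectors of Proposition~\ref{construct-seeds}, and reduces mod $p$ afterwards. This is a division-free closed formula and is what Algorithm~\ref{alg1} implements. When $W_T$ is invertible mod $p$ and you take $T$ as your pivot set, its seeds are, mod $p$, just $\pm\det W_T$ times yours.

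You row-reduce over $\ZZ_p$ instead, which buys two things:
\begin{enumerate}
\item Your seeds are normalized: each has one free variable, with exponent $1$. So $c_j=a_{k+j}\bmod p$ is read off directly, and $c\mapsto m_c$ is an explicit bijection onto the invariant monomials in the box. You never need Proposition~\ref{intgenset} or the Gauch\'e lemma (Proposition~\ref{prop: Gauche}).
\item You avoid the condition $p\nmid\det W_T$, which the paper's construction needs but does not impose. For example, take $p=3$ and $W=\left(\begin{smallmatrix}1&2&1&0\\2&1&0&1\end{smallmatrix}\right)$. The minor on $T=\{1,2\}$ is $-3\neq 0$, yet both Pl\"ucker seeds reduce to $(2,2,0,0)$, and the minimal generator $x_2x_3x_4^2$ is never reached.
\end{enumerate}
Your irreducibility argument (a coordinate $\ge p$ forces $a=pe_i$) also proves something the paper only asserts after Proposition~\ref{prop:p-piping}: every minimal generator other than a pure power lies in the box.

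Two small corrections.

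First, your ``without loss of generality'' passage to a faithful action changes $k$. The image of $G$ is $(\ZZ_p)^{k'}$ with $k'=\rk_{\ZZ_p}W$, and you then get $n-k'$ seeds, not $n-k$. For non-faithful actions the count $n-k$ is simply false; the trivial action needs $n$ seeds. So faithfulness, i.e.\ $\rk_{\ZZ_p}W=k$, must be stated as a hypothesis. The paper makes the same tacit assumption.

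Second, $s_j^p$ equals $x_{k+j}^p$ times further $p$-th powers, not $x_{k+j}^p$ itself. So the pure powers are not produced by growth and must simply be adjoined, as you and the paper both end up doing.
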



\section{Background}~\label{sec:background}

In this section, we use known results on invariant rings and minimal generating sets in order to study linear diagonal actions of finite abelian groups on polynomial rings. To make our approach precise, we now provide some needed background and tools for studying finite abelian group actions on polynomial rings.

\subsection{Representations of linear actions}
In this subsection, we provide the reader with some context for our setup. We will highlight how we connect group actions on a vector space and group actions on the ring of polynomial functions.
 
First, recall that a \textbf{(left) group action} of group $G$ on set $S$ is a map namely, 
$$G\times S \rightarrow S,\;\;(g,s)\mapsto gs$$
such that $es=s$ and $g(hs)=(gh)s$ for all $g,h\in G$ and $s\in S$.
If $S$ has the structure of a vector space and we have that $G$ respects the structure of $S$, then we have a \emph{linear action.} Formally this is as follows:

\begin{Definition}\label{Def: linear action}
Let $G$ be a group, $V$ a vector space over a field $\KK$. Then a \textbf{linear group action} is a group action $$G\times V\rightarrow V,\quad (g,v)\mapsto gv$$ where for all $v,w\in V$ and $c\in \KK$ we have that 
$$g(v+cw)=g v+cg w.$$
\end{Definition}
When doing computations of linear actions on finite-dimensional vector spaces, it is generally useful to consider the matrix representations of the group elements.\footnote{Recall that \emph{linear actions}, \emph{linear representations}, and \emph{matrix representations} are equivalent (up to some choice of basis).} In particular, for every linear action, one can find a matrix representation in the following manner. Choose a basis $\{e_1,\ldots,e_n\}$ of $V$ over $\KK$. Now a linear action of $G$ on $V$ is uniquely defined by the maps
$$(g,e_i)\mapsto ge_i$$ for $i\in[n]$ and $g\in G.$ In particular, for $g\in G$ and $i\in [n],$ $ge_i\in V$ so there exists coefficients $a_{ij}\in \KK$ such that $$ge_i=\sum_{j=1}^n a_{ij}e_j.$$ Then the matrix representation of $g\in G$ is $[a_{ij}]$.
\begin{Example}
Let $G = S_3$, so the group of permutations of three elements
$$S_3 = \{(),(1 \,2),(1\,3),(2\,3),(1\,2\,3),(1\,3\,2)\}. $$

An example of a matrix representation for the linear action for the group $G$ is given by
\begin{align*}
S_3&\mapsto
\{I, \begin{bmatrix}
    0 & 1& 0\\
    1 & 0& 0\\
    0 & 0& 1
\end{bmatrix},\begin{bmatrix}
    0 & 0& 1\\
    0 & 1& 0\\
    1 & 0& 0
\end{bmatrix},\begin{bmatrix}
    1 & 0& 0\\
    0 & 0& 1\\
    0 & 1& 0
\end{bmatrix},\begin{bmatrix}
    0 & 1& 0\\
    0 & 0& 1\\
    1 & 0& 0
\end{bmatrix},\begin{bmatrix}
    0 & 0& 1\\
    1 & 0& 0\\
    0 & 1& 0
\end{bmatrix} \}\\
&= \left\langle \begin{bmatrix}
0 & 1& 0 \\
0&0&1 \\
1&0&0
\end{bmatrix}, \begin{bmatrix}
0 & 1& 0 \\
1&0&0 \\
0&0&1
\end{bmatrix} \right\rangle.
\end{align*}
\end{Example}
Note that a matrix representation of a linear action ``represents'' the action as the standard matrix multiplication against a column vector. In particular, this matrix algebra perspective gives a natural extension of an action of $G$ onto its \emph{dual space} $V^*$, the set of linear functionals that map $V\rightarrow \KK$. We often represent these linear functionals as row vectors. This is a sensible choice since multiplying a row vector by a column vector encodes the evaluation pairing. 

\subsection{Abelian group actions on polynomial rings}
Recall that not only is the dual space $V^*$ a vector space, when $V$ is finite dimensional, $V\cong V^*.$ In particular, $V$ and $V^*$ are identified in the following manner: fixing an ordered basis $e_1,\ldots,e_n$ for $V$ immediately obtains a dual basis $x_1,\ldots,x_n$ for $V^*$ where $x_i(e_j)=\delta_{ij}$ for $1\leq i,j\leq n$ where $\delta_{ij}$ is the Kronecker delta function. This construction allows us to think of any action on $V$ also as an action on $V^*$. Let $G\times V\rightarrow V$ be a group action and let $e_1,\ldots, e_n$ be a basis for $V$. Then we can define a linear action on $V^*$ by 
$$G\times V^*\rightarrow V^*$$ 
by the point wise mapping
$$g\cdot x_i(e_j)=x_i(g^{-1}e_j), $$ 
which is then extended linearly. As an aside, for $G$ abelian, the action on the vector space and the dual space are $G$-isomorphic. 

The purpose of considering dual spaces is that they have additional structure coming from pointwise multiplication on the linear functionals. This gives us a \emph{ring of polynomial functions} in $n$ variables with over the field $\KK$, denoted $\KK[x_1,\ldots,x_n]$. If one does not specify a basis for the vector space $V$, we can also denote this ring as $\KK[V]$ and picking a basis gives us $\mathbb{K}[V]\cong \mathbb{K}[x_1,\ldots, x_n]$ where $x_1,\ldots,x_n$ is the dual basis to the chosen basis of $V$.

The standard addition and multiplication operations (on polynomials) agree with the pointwise addition and multiplication operations on the linear functionals.
Finally, we can define an action on $\mathbb{K}[V]$ by prescribing that the action preserves the multiplication structure. More precisely, consider the monomial $$\mathbf{x}^\mathbf{\alpha}=x_1^{\alpha_1} \cdots x_n^{\alpha_n},$$ 
where $\mathbf{\alpha}$ is an exponent vector. Then we can define a $G-$action on this monomial as follows: $$g\cdot \mathbf{x}^\mathbf{\alpha}=g(x_1)^{\alpha_1}\ldots g(x_n)^{\alpha_n}.$$

For the rest of the paper, assume that $G$ is a finite abelian group acting on a vector space $V$ of dimension $n$ over $\KK=\mathbb{C}$. Working over the infinite field $\mathbb{C}$ guarantees that $|G|$ is invertible (we call this the non-modular case) and moreover, guarantees the existence of a $|G|$-th root of unity. If in $G$ every element has order a power of $p$ (so $G$ is a $p$-group) and we let $|G|=s$, once we fix a primitive $s$-th root of unity $\mu_s$, then each element $g\in G$ can be mapped to an element in $\CC$ of the same order of $g$: specifically, $\mu_s^{\frac{|G|}{o(g)}}$.

\subsection{Abelian characters and weights}
Above, we discussed the importance of working over a field $\CC$ as this gives us a $|G|$-th root of unity. Mapping each group element to a root of unity is an example of a character of the group.
\begin{Definition}
A multiplicative character is a group homomorphism $\chi : G \rightarrow \mathbb{C}^{*}$ where $\mathbb{C}^{*} = \mathbb{C}\setminus\{0\}$.
\end{Definition}
The product of two multiplicative characters is a multiplicative character, so the set of multiplicative characters forms a group $\widehat{G}$. If the group homomorphism is an isomorphism, then $G \cong \widehat{G}$. We will consider actions with trivial kernel so that we can identify the two groups.

As $G$ is abelian, the action of $G$ on $V$ is diagonalizable and so it can be represented with a diagonal matrix. In fact, for $\rho: G \to V$ a linear representation of $G$, we have that all matrices $\rho(g)$ commute with each other and so they share eigenvalues and eigenvectors. For this reason, they are simultaneously diagonalizable, i.e., diagonalizable under the same change of basis. 


In particular, after we choose a basis for $V$ so that $\mathbb{C}[V] = \mathbb{C}[x_1,\ldots,x_n]$, then  for each $g \in G$, 
$$g\cdot x_j = \chi_j(g)x_j,$$ 
where $\chi_j: G \rightarrow \mathbb{C}^{*}$ is a multiplicative character (i.e., it has the property that $\chi_j(gh) = \chi_j(g)\chi_j(h)$).

Moreover, for any finite abelian group, we can choose an isomorphism 
$$G \cong \mathbb{Z}/(d_1) \times \ldots \times \mathbb{Z}/(d_r)$$ 
with $d_i \mid d_{i+1}$ for all $i$. This choice allows for the minimal number of cyclic factors and so the minimal number of generators for $G$.
 In particular, if we identify $G$ with the group $\mathbb{Z}/(d_1) \times \ldots \times \mathbb{Z}/(d_r)$, we can use additive notation for the group structure in $G$. After this identification, a set of generators of $G$ is
\begin{align*}
    g_1 &= (1,0,...,0) \\
    g_2 &= (0,1,...,0) \\
    \vdots \\
    g_r &= (0,0,...,1) \\
\end{align*}
where $g_i$ has order $d_i$ for $1 \leq i \leq r$.

Now let $\mu_i \in \mathbb{C}^{\star}$ be a $d_i-$th root of unity. The diagonal action of $g_i$ on $x_j$ is given by 
$$g_i \cdot x_j = \chi_j(g_i)x_j = \mu_i^{w_{ij}}x_j$$ for some $w_{ij} \in \mathbb{Z}/(d_i)$. We call $w_{ij}$ the \newword{weight} of the action $g_i$ on $x_j$. The integer vector $$\mathbf{w}_j = (w_{1j},\ldots,w_{rj}) \in \mathbb{Z}/(d_1) \times \ldots \times \mathbb{Z}/(d_r)$$ is the weight associated to $\chi_j$, or the weight of the action $G$ on the variable $x_j$.Similarly, we will define
$$\mathbf{w}^i = (w_{i1},\ldots,w_{in}) \in (\mathbb{Z}/(d_i))^n,$$ 
to be the weight of the action $g_i$ on $V$.  We collect all weights in the \newword{weight matrix} $W= [w_{i,j}]_{i,j}$, whose rows are the weights of the generators of $G$ and whose columns are the weights of the generators of $R$.

If the action has trivial kernel, by mapping multiplicative characters to their weight, we obtain an isomorphism between the group of multiplicative characters $\widehat{G}$ and the group $\mathbb{Z}/(d_1) \times \ldots \times \mathbb{Z}/(d_r) \cong G$, showing explicitly that $\widehat{G}$ and $G$ are isomorphic.

\subsection{Invariant rings}
Let $M$ denote the set of non-constant monomials in $R=\mathbb{C}[x_1,\ldots,x_n]$. Then, since the action of the group $G$ is diagonal, each monomial $m \in M$ is mapped to a scalar multiple of itself.
\begin{Definition}
A polynomial $f \in R$ is called \newword{invariant} under the action of $G$ if for all $g\in G$, we have that $g\cdot f = f$.
\end{Definition}
Notice that a polynomial will be invariant if and only if each of its monomials is. As a result, we can choose a set of invariant monomials as a minimal generating set for the invariant ring.
\begin{Definition}
The set of all invariant polynomials in $n$ variables with complex coefficients under the linear action of a finite group $G$ forms a ring called the \newword{invariant ring}, which we will denote by $R^G$.
\end{Definition}

For the action of a finite group on a polynomial ring with complex coefficients, Noether \cite{em} proved a bound on the degree of the generators of the invariant ring.
\begin{Theorem}[Noether's Degree Bound, Noether 1915] 
For a finite group $G$ over $\mathbb{C} $, every invariant ring $R^G$ can be generated by a finite set of invariants. In particular, every invariant ring can be generated by a \newword{minimal generating set} $\mathcal{M}$ with $|\mathcal{M}| \leq |G|$. 
\end{Theorem}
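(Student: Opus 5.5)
The plan has three parts: finite generation via the Reynolds operator, Noether's degree bound via Benson's argument, and a check of how that degree bound yields the count $|\mathcal{M}|\le |G|$. The third part is where the statement as worded breaks down.

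\textbf{Finite generation.} Since $\operatorname{char}\CC=0$, $|G|$ is invertible. So the Reynolds operator
$$\mathcal{R}(f)=\frac{1}{|G|}\sum_{g\in G} g\cdot f$$
is an $R^G$-linear projection $R\to R^G$. Let $I\subseteq R$ be the ideal generated by all homogeneous invariants of positive degree. By Hilbert's basis theorem, $I$ is generated by finitely many homogeneous invariants $f_1,\ldots,f_m$. I then show by induction on degree that every homogeneous invariant $f$ lies in $\CC[f_1,\ldots,f_m]$:
\begin{enumerate}
\item Write $f=\sum h_i f_i$ with $h_i$ homogeneous of strictly smaller degree.
\item Apply $\mathcal{R}$ to get $f=\sum \mathcal{R}(h_i) f_i$.
\item Each $\mathcal{R}(h_i)$ is an invariant of smaller degree, so it lies in $\CC[f_1,\ldots,f_m]$ by induction.
\end{enumerate}
A minimal generating set $\mathcal{M}$ is then obtained by discarding redundant $f_i$. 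It has the same cardinality as a homogeneous basis of $\mathfrak{m}/\mathfrak{m}^2$, where $\mathfrak{m}=R^G_+$, so it is finite and its size does not depend on the choice of minimal set.

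\textbf{Degree bound.} Next I show $R^G$ is generated in degrees at most $|G|$, using Benson's argument.
\begin{enumerate}
\item Consider the ideal $J=(g\cdot x_j - x_j:\ g\in G,\ 1\le j\le n)$ in $R\otimes R$, or equivalently use the identity
$$\prod_{g\in G}\bigl(g\cdot \ell - \ell'\bigr)$$
for linear forms $\ell,\ell'$.
\item Expanding this identity shows that every monomial of degree $|G|$ lies in the ideal $R\cdot R^G_+$.
\item Hence every homogeneous invariant of degree greater than $|G|$ is $\mathcal{R}$ of an element of $R_+^{G}R_+$, which is a sum of products of invariants of lower positive degree. By induction, such an invariant is decomposable.
\end{enumerate}

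\textbf{Passing to the count $|\mathcal{M}|\le|G|$.} Here I would try to bound $\dim_\CC(\mathfrak{m}/\mathfrak{m}^2)$ by $|G|$ from the degree bound. I expect this step to fail in general, and I would check it on the simplest diagonal example before proceeding. Take $G=\ZZ_2$ acting by $-1$ on every variable. Then $R^G$ is minimally generated by all $\binom{n+1}{2}$ monomials of degree $2$, which exceeds $|G|=2$ once $n\ge 2$. So the second sentence, read literally as a cardinality bound, is false. What the argument above establishes is:
\begin{itemize}
\item $\mathcal{M}$ is finite;
\item every element of $\mathcal{M}$ has degree at most $|G|$;
\item consequently $|\mathcal{M}|\le\binom{n+|G|}{n}-1$, since for diagonal (in particular abelian) actions $\mathcal{M}$ can be taken to consist of monomials of degree at most $|G|$.
\end{itemize}
I would present the proof of this corrected statement, $\deg m\le |G|$ for all $m\in\mathcal{M}$, and note that the cardinality version holds only in degenerate cases such as $n=1$.
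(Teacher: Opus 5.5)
The paper gives no proof of this statement. It cites Noether (1915), and in the Introduction it uses the result in its correct form: $R^G$ has a minimal generating set of invariants of degree at most $|G|$. Your reading is right. The clause $|\mathcal{M}|\le|G|$ misstates a degree bound, and your counterexample is valid. It even sits inside the paper's own framework: $p=2$, $k=1$, $W=(1\ 1)$ on $\CC[x_1,x_2]$ has the three minimal generators $x_1^2,x_1x_2,x_2^2$ while $|G|=2$.

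One overstatement on your side: the cardinality bound does hold for many particular actions with $n\ge 2$. The paper's $\ZZ_3\times\ZZ_3$ example has $5\le 9$ generators. What fails for every $n\ge 2$ is the universal claim, for instance for $\ZZ_m$ acting by scalars.

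Your Hilbert/Reynolds argument for finite generation is correct. So is your last step: an invariant of degree $>|G|$ lies in $R_+\cdot R^G_+$, and applying $\mathcal{R}$ puts it in $(R^G_+)^2$.

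The gap is the key lemma of the degree bound, $R_{|G|}\subseteq R\cdot R^G_+$. You assert it (``expanding this identity shows\dots'') but never derive it. Neither object you name yields it as written: $J$ is built from elements of $R$ and is not the Hilbert ideal, and $\prod_{g\in G}(g\cdot\ell-\ell')$ is not zero for independent $\ell,\ell'$, so there is no identity to expand. What does work is taking $\ell'=\ell$. The factor for $g=e$ vanishes, so
\[
0=\prod_{g\in G}(\ell-g\cdot\ell)=\ell^{|G|}+\sum_{i=1}^{|G|}(-1)^i e_i\,\ell^{|G|-i},
\]
where $e_i$ is the $i$-th elementary symmetric function of the orbit $\{g\cdot\ell\}_{g\in G}$. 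Each $e_i$ is a homogeneous invariant of degree $i\ge 1$, hence $\ell^{|G|}\in R\cdot R^G_+$. This only covers powers of linear forms. You still need one of the following:
\begin{itemize}
\item the characteristic-zero fact that $|G|$-th powers of linear forms span $R_{|G|}$; or
\item Benson's identity $\sum_{\tau\in G}\prod_{\sigma\in G}(\tau f_\sigma-\sigma f_\sigma)=0$, which puts every product of $|G|$ elements of $R_+$ into $R\cdot R^G_+$ directly.
\end{itemize}

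In the diagonal setting of this paper, a shorter route gives finite generation and the degree bound together. Suppose $x_{j_1}\cdots x_{j_d}$ is invariant with $d>|G|$. The partial sums $\mathbf{w}_{j_1}+\dots+\mathbf{w}_{j_r}$, $1\le r\le |G|$, lie in a group of order $|G|$, so by pigeonhole one of them is $0$ or two coincide. Either way the monomial has a nonconstant invariant divisor of degree at most $|G|$, and the cofactor is again a nonconstant invariant. This zero-sum (Davenport constant) argument is also what underlies Olson's bound $k(p-1)+1$, quoted right after the statement.
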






As previously mentioned, our primary objective is to find ``faster'' or different constructions of minimal generating sets of invariants, perhaps by adding minimal conditions (or assumptions). However, this can be fairly challenging. Consider the following example. 
\begin{Example}
    Let $$G = \left<\begin{bmatrix}-1 & 0 \\
0 & 1 \\\end{bmatrix} , \begin{bmatrix}1 & 0 \\
0 & -1 \\\end{bmatrix}\right> \cong \ZZ_2 \times \ZZ_2$$ act on the polynomial ring $\mathbb{C}[x_1,x_2]$. We can immediately verify that $x_1$ and $x_2$ are not invariant under $G$. Let $g_1 = \begin{bmatrix}-1 & 0 \\
0 & 1 \\\end{bmatrix}$ and $g_2 = \begin{bmatrix}1 & 0 \\
0 & -1 \\\end{bmatrix}$. Then $g_1 \cdot x_1 = -x_1$ and $g_2 \cdot x_2 = -x_2,$ so $x_1$ and $x_2$ are not invariant. But since $g_i \cdot x_1^nx_2^m = (g_i\cdot x_1)^n(g_i\cdot x_2)^m,$ we see that $x_1^2$ and $x_2^2$ are invariant. Moreover, any monomial of the form $x_1^nx_2^m$ where $n,m$ are even is invariant, whereas if either $n$ or $m$ is odd, the monomial $x_1^nx_2^m$ is not invariant. Thus, the invariant ring under this group action consists of all polynomials where each term is of the form $\lambda x_1^nx_2^m$ for $\lambda \in \mathbb{C}$ and $n,m$ non-negative even integers. Therefore, a minimal generating set for this invariant ring is $\mathcal{M} = \{x_1^2,x_2^2\}$.
\end{Example}
From this example, we see that it becomes complicated and time-consuming to determine minimal generating sets as the order of the acting group and the size of the polynomial ring grow. Consequently, the purpose of our research is to find ways to algebraically and geometrically construct minimal generating sets of invariant rings to improve upon existing algorithms and implementations.

Luckily, Noether's degree bound is achieved only when the group is cyclic (as proved by Schmid \cite{schmid}). In particular, in the case of elementary abelian groups, Olson \cite{ol1} showed that there is a better bound on the degree of the generators.
\begin{Theorem}[Olson's Degree Bound, Olson 1969] 
Let $G$ be an elementary abelian group, so that $G \cong (\ZZ/p)^k$. For a linear action of $G$, we can find minimal generators for the ring of invariants of degree at most $k(p-1) + 1$.
\end{Theorem}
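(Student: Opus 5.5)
The plan is to convert the statement into a combinatorial statement about zero-sum sequences in $(\ZZ/p)^k$, and then prove that statement with the Chevalley--Warning theorem.

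\textbf{Step 1: reduction to monomials.} As noted before the statement, $R^G$ is generated by invariant monomials, so it suffices to bound the degree of monomials in a minimal monomial generating set. Identify $G$ with $(\ZZ/p)^k$ written additively. Write $\mathbf{w}_j\in(\ZZ/p)^k$ for the weight of $x_j$ (the column of the weight matrix $W$). Since $g_i\cdot \mathbf{x}^\alpha=\mu_i^{\sum_j \alpha_j w_{ij}}\mathbf{x}^\alpha$, the monomial $\mathbf{x}^\alpha$ is invariant if and only if
$$\sum_{j=1}^n \alpha_j\mathbf{w}_j=0 \quad\text{in } (\ZZ/p)^k.$$
So $\mathbf{x}^\alpha$ corresponds to the sequence (multiset) $S_\alpha$ of length $\deg \mathbf{x}^\alpha$ containing $\mathbf{w}_j$ exactly $\alpha_j$ times. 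Invariance of $\mathbf{x}^\alpha$ means $S_\alpha$ is a zero-sum sequence.

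\textbf{Step 2: minimal generators are minimal zero-sum sequences.} Suppose an invariant monomial $\mathbf{x}^\alpha$ factors as $m_1m_2$ with $m_1,m_2$ non-constant invariant monomials. Then $S_\alpha$ splits into two nonempty zero-sum subsequences. Conversely, a proper nonempty zero-sum subsequence of $S_\alpha$ gives a divisor $m_1$ of $\mathbf{x}^\alpha$ that is invariant. The complementary factor $m_2=\mathbf{x}^\alpha/m_1$ is then also invariant, since its weight sum is $0-0=0$. Hence the minimal monomial generators of $R^G$ are exactly the monomials whose sequence is a \emph{minimal} zero-sum sequence, meaning it has no proper nonempty zero-sum subsequence. (A variable with weight $0$ gives a generator of degree $1$, which is consistent with the bound.) It therefore suffices to show the following claim.

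\emph{Claim.} Every sequence of length $m=k(p-1)+1$ in $(\ZZ/p)^k$ has a nonempty zero-sum subsequence.

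Given the claim, let $S$ be a minimal zero-sum sequence of length $L$. Deleting one term leaves a sequence of length $L-1$ with no nonempty zero-sum subsequence. Such a subsequence would be a proper nonempty zero-sum subsequence of $S$, contradicting minimality. The claim then forces $L-1\le k(p-1)$, so $L\le k(p-1)+1$, which is the stated bound.

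\textbf{Step 3: proof of the claim.} This is the main step. Let $a_1,\dots,a_m\in\mathbb{F}_p^k$ with $a_j=(a_{1j},\dots,a_{kj})$. Consider the $k$ polynomials over $\mathbb{F}_p$ in variables $y_1,\dots,y_m$:
$$f_i(y)=\sum_{j=1}^m a_{ij}\,y_j^{p-1},\qquad i=1,\dots,k.$$
Their degrees sum to $k(p-1)<m$. By the Chevalley--Warning theorem, the number of common zeros of $f_1,\dots,f_k$ in $\mathbb{F}_p^m$ is divisible by $p$. Since $y=0$ is a common zero, there is a nonzero common zero $y$. Let $T=\{j: y_j\neq 0\}$, which is nonempty. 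By Fermat's little theorem, $y_j^{p-1}=1$ for $j\in T$ and $y_j^{p-1}=0$ otherwise. Therefore $f_i(y)=0$ for all $i$ says exactly that $\sum_{j\in T}a_j=0$, which gives the required nonempty zero-sum subsequence.

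I expect the only real obstacle to be this last step. Its difficulty is in choosing the right polynomial system, $y_j^{p-1}$ as an indicator of $y_j\neq 0$, rather than in any computation. If Chevalley--Warning may not be cited, I would prove the needed congruence directly. The count of common zeros satisfies
$$N\equiv\sum_{y\in\mathbb{F}_p^m}\prod_{i=1}^k\bigl(1-f_i(y)^{p-1}\bigr)\pmod p.$$
The product has total degree at most $k(p-1)^2<m(p-1)$. Hence every monomial in it has some variable with exponent less than $p-1$, and the sum of such a monomial over $\mathbb{F}_p^m$ vanishes modulo $p$.
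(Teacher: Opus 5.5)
Your proof is correct, but there is no in-paper argument to compare it with: the paper quotes Olson's bound with attribution and gives no proof. Your route has two parts, and each step checks out. First, you identify minimal monomial generators with minimal zero-sum sequences of the column weights $\mathbf{w}_j\in(\ZZ/p)^k$, which turns the degree bound into the Davenport-constant bound $D((\ZZ/p)^k)\le k(p-1)+1$ (Schmid's observation that the Noether number equals the Davenport constant for abelian groups). Second, you prove that bound with Chevalley--Warning applied to $f_i=\sum_j a_{ij}y_j^{p-1}$. Your self-contained proof of the needed congruence, via the degree count $k(p-1)^2<m(p-1)$, is also fine.

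Olson's original argument works in the group algebra $\mathbb{F}_p[G]$ instead. Suppose $g_1,\dots,g_m$ has no nonempty zero-sum subsequence. Then the coefficient of the identity in $\prod_i(1-g_i)$ is $1$. But this product lies in the $m$-th power of the augmentation ideal, and that power vanishes once $m\ge 1+\sum_i(p^{e_i}-1)$ for $G\cong\bigoplus_i\ZZ/p^{e_i}$, since $(1-h)^{p^{e}}=1-h^{p^{e}}=0$ in characteristic $p$. That approach gives the bound for every finite abelian $p$-group, which matters for the paper's proposed extension to $(\ZZ/p^m)^k$. Your argument depends on the weights lying in a vector space over $\mathbb{F}_p$, so that $y^{p-1}$ is the indicator of $y\neq 0$. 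It is therefore confined to exponent $p$, which is exactly the case stated, and in exchange it is fully elementary.

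One small point to make explicit in Step 1. For a non-diagonal linear action, first change coordinates to diagonalize, which is possible over $\CC$ because $G$ is abelian. That linear change of variables preserves degrees, so the bound transfers back to the original coordinates.
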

For this reason, considering elementary abelian groups has the potential to lead to faster methods of computing invariants. We introduce a two-step algorithm that achieves this goal.

\section{Results}~\label{sec:results}
Our algorithm for computing minimal generating sets of invariant rings of elementary abelian groups proceeds in 2 stages. First, with \emph{seed generation}, we find a collection of $n-k$ invariant monomials called \emph{seeds} whose exponents generate the kernel of the weight matrix $W$. The second stage we call the \emph{seed growth}, where we expand our small collection into a minimal generating set for the ring of invariants.

We first illustrate a key idea of our method by considering the action of $G=\ZZ_p\times\ZZ_p$ (for $p$ a prime) on the polynomial ring $R=\CC[x,y,z]$. Our target object is a minimal generating set of the invariant ring $R^G$. With one invariant monomial $m$ and the monomials  $x^p, y^p$, and $z^p$, we will be able to construct the rest of a minimal generating set. We call $m$ a \newword{seed invariant}. 

\begin{Example}~\label{ex: 2by3}
Consider the group $G\cong \ZZ_3\times\ZZ_3$ acting on $\CC[x,y,z]$ defined by the weight matrix
$$W=\begin{bmatrix}
1 & 0 & 1\\
0 & 1 & 1 
\end{bmatrix}.$$

Using the Derksen-Gandini algorithm \cite{thesis}, we find a minimal generating set of monomials: 
$$\{x^3,y^3,z^3, xyz^2,x^2y^2z\}.$$ 

Observe that the terms with mixed support have nontrivial algebraic relations as follows: $$(xyz^2)^2=x^2y^2z^4=z^3\cdot(x^2y^2z),$$ $$(x^2y^2z)^2=x^3\cdot y^3\cdot (xyz^2).$$

With the seed invariant $m=xyz^2$ and the pure powers $x^3,y^3,z^3$ we obtain the whole minimal generating set by considering all the powers on $m$ modulo 3, as $\overline{m^2}=\overline{x^2y^2z^4}=x^2y^2z$.
\end{Example}

This behavior is not unique to Example~\ref{ex: 2by3}. In fact, it is the primary observation that motivates the algorithm. What follows is then a two-fold process: (1) describe a method to find a small collection of seeds that can grow into a full generating set (seed generation)  and (2) expand the seeds by taking algebraic combinations of them until we get a set of minimal generators (seed growth). We start with the second step as it was discovered first, and then discuss how to accomplish seed generation.

\subsection{Seed growth}~\label{subsec: seed growth}
Say $G$ is $p$-abelian, i.e., $G\cong (\ZZ_p)^k$ and acts on the polynomial ring $\mathbb{C}[x_1,\ldots,x_n]$ and let $\mu_p\in\mathbb{C}$ denote some $p$th root of unity. Notice that there are invariants which we can immediately identify by  knowing the order of $G$, namely $x_j^p$ for all $j$. We call these monomials the \newword{pure powers}.

\begin{Lemma}\label{lem: purepowers 2by3}
Let $G\cong (\ZZ_p)^k$ be a group acting on $\CC[x_1,\ldots,x_n]$. Then $x_j^p$ is invariant for $j\in[n]=\{1, \ldots , n\}$. 
\end{Lemma}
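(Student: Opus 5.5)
The plan is to use the diagonal form of the action set up in the Background section. Since $G\cong(\ZZ_p)^k$ is abelian, after a change of basis each $g\in G$ acts by $g\cdot x_j=\chi_j(g)x_j$, where $\chi_j:G\to\CC^*$ is a multiplicative character. Because the action on monomials respects multiplication, we have $g\cdot \mathbf{x}^\alpha=\prod_j g(x_j)^{\alpha_j}$. In particular,
$$g\cdot x_j^p=(\chi_j(g)x_j)^p=\chi_j(g)^p\,x_j^p.$$
So it suffices to show that $\chi_j(g)^p=1$ for every $g\in G$.

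This follows because every element of $(\ZZ_p)^k$ has order dividing $p$, that is, $g^p=e$. Since $\chi_j$ is a homomorphism, $\chi_j(g)^p=\chi_j(g^p)=\chi_j(e)=1$.

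Equivalently, in the weight language one can argue generator by generator. With generators $g_1,\dots,g_k$, each $d_i=p$, and we have $g_i\cdot x_j=\mu_i^{w_{ij}}x_j$ with $\mu_i$ a $p$-th root of unity. Then $g_i\cdot x_j^p=\mu_i^{pw_{ij}}x_j^p=x_j^p$. Invariance under the generators then gives invariance under all of $G$, since $g\cdot(h\cdot f)=(gh)\cdot f$.

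There is essentially no obstacle. The only point needing care is that the variables $x_j$ must be the diagonalizing coordinates. The statement implicitly works in this basis, as fixed in the Background section. In an arbitrary basis, $x_j^p$ need not be invariant, so the proof should explicitly invoke that basis choice.
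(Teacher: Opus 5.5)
Your proof is correct and essentially the same as the paper's: the paper computes $g_i(x_j^p)=\mu_p^{p\,w_{ij}}x_j^p=x_j^p$ on the generators, exactly as in your weight-language paragraph, and your character version ($\chi_j(g)^p=\chi_j(g^p)=1$) is the same idea applied to an arbitrary $g$, which avoids the step from generators to all of $G$ that the paper leaves unstated. Your remark that the $x_j$ must be the diagonalizing coordinates is also a fair point, which the paper leaves implicit in its choice of weight matrix.
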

\begin{proof}
Since $G$ is $p$-abelian, there exists some weight matrix $W=(w_{ij})$ defining the action of $G$ as $g_i x_j=\mu_p^{w_{ij}}x_j$ where $\mu_p$ is a $p$th root of unity, i.e., $\mu_p^p=1$. Since $g$ is a homomorphism, expanding along gives us the desired result:
$$g_i(x_j^p)=(g_i(x_j))^p={\mu_p^{p \, w_{ij}} x_j^p}=(\mu_p^p)^{w_{ij}}x_j^p=x_j^p.$$ Therefore, any pure power monomial is invariant.
\end{proof}
We then obtain the following corollary on products of pure powers:
\begin{Corollary}~\label{cor: purepowers 2by3}
For $I \subseteq [n],$ $x^{p_I}=\prod_{j\in I}x_j^p$ is invariant under the action of $G$. 
\end{Corollary}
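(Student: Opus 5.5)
The plan is to reduce the statement to Lemma~\ref{lem: purepowers 2by3} together with the fact that the invariant ring $R^G$ is closed under multiplication. Fix $I \subseteq [n]$. If $I=\emptyset$, the product is empty, so $x^{p_I}=1$, and constants are trivially fixed by every $g\in G$. For nonempty $I$, Lemma~\ref{lem: purepowers 2by3} already tells us that each factor $x_j^p$ with $j\in I$ is invariant.

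Next I will check that the product of invariants is invariant. Each $g\in G$ acts on $R$ multiplicatively, since the action on monomials was defined by $g\cdot \mathbf{x}^{\alpha}=g(x_1)^{\alpha_1}\cdots g(x_n)^{\alpha_n}$ and then extended linearly. Therefore
\[
g\cdot \prod_{j\in I} x_j^p=\prod_{j\in I} g\cdot x_j^p=\prod_{j\in I}x_j^p .
\]
This holds for all $g\in G$, so $x^{p_I}$ is invariant. One could instead argue with a short induction on $|I|$, but the displayed one-line computation is enough.

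As an alternative check, I can verify the claim directly on the generators $g_i$ using weights: $g_i\cdot x^{p_I}=\mu_p^{p\sum_{j\in I}w_{ij}}x^{p_I}=x^{p_I}$. Invariance under the generators implies invariance under all of $G$, because $G$ acts by ring automorphisms.

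I do not expect any real obstacle here. The only point needing care is that the action is multiplicative on all of $R$, not only on variables, and that follows directly from how the action was defined.
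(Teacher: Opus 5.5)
Your proposal is correct and is essentially the paper's argument: the paper states this corollary as an immediate consequence of the pure-power lemma, and the unstated step is your observation that the action is multiplicative, so a product of invariants is invariant. One small point in your alternative check: invariance under the generators extends to all of $G$ because of the action axiom $g(hf)=(gh)f$, not because $G$ acts by ring automorphisms; this does not affect your main argument.
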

More generally, if we allow for rational functions, we have the following:
\begin{Corollary}
    For any $\beta\in \ZZ^n$, $x^{p\beta}$ is invariant as an element of $\operatorname{Frac}(R)^G$.
\end{Corollary}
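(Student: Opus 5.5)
The corollary states that for $\beta\in\ZZ^n$ the Laurent monomial $x^{p\beta}$ is fixed by $G$ in $\operatorname{Frac}(R)$. The plan is to reduce it to Lemma~\ref{lem: purepowers 2by3} by extending the action of $G$ to the fraction field in the only natural way. Since each $g\in G$ acts on $R$ by a ring automorphism, it extends uniquely to an automorphism of $\operatorname{Frac}(R)$ via $g\cdot(f/h)=(g\cdot f)/(g\cdot h)$. This is well defined because $g\cdot h\neq 0$ whenever $h\neq 0$, as $g$ is injective on $R$. The first step is to record that this extension is again a group action and is multiplicative, which is immediate from the corresponding properties on $R$.

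Next, write $\beta=\beta^+-\beta^-$ with $\beta^\pm\in\ZZ_{\ge 0}^n$ having disjoint supports. Then
$$x^{p\beta}=\frac{x^{p\beta^+}}{x^{p\beta^-}},\qquad x^{p\beta^\pm}=\prod_j (x_j^p)^{\beta^\pm_j}.$$
By Lemma~\ref{lem: purepowers 2by3} each $x_j^p$ is invariant, so the numerator and denominator are products of invariants and hence invariant, as in Corollary~\ref{cor: purepowers 2by3}, since $g$ is multiplicative. Applying $g$ to the quotient then gives $g\cdot x^{p\beta}=x^{p\beta^+}/x^{p\beta^-}=x^{p\beta}$.

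As an alternative check, one can compute directly: $g_i\cdot x^{p\beta}=\mu_p^{p\sum_j w_{ij}\beta_j}x^{p\beta}=x^{p\beta}$, because $\mu_p^p=1$ and negative exponents are handled by $g_i(x_j^{-1})=\mu_p^{-w_{ij}}x_j^{-1}$. Since the $g_i$ generate $G$, this suffices.

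There is essentially no obstacle. The only point needing care is justifying that the action on $\operatorname{Frac}(R)$ is well defined, which is the first step above.
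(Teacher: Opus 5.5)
Your proof is correct and follows the same approach as the paper. The paper gives no separate proof and treats the statement as an immediate consequence of Lemma~\ref{lem: purepowers 2by3} and Corollary~\ref{cor: purepowers 2by3} via multiplicativity, which is exactly your quotient argument $x^{p\beta}=x^{p\beta^+}/x^{p\beta^-}$; your check that the action extends to $\operatorname{Frac}(R)$ supplies the one detail the paper leaves implicit.
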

Albeit simple, we can use these corollaries in a reduction mod $p$ fashion to cook up other invariants. In particular, observe the following: 
\begin{Proposition} 
    Let $G$ be elementary $p-$abelian, $R=\CC[x_1,\ldots,x_n],$ $0\leq \alpha_1,\ldots,\alpha_n<p$, and for $I\subseteq [n]$, let $p_I=\sum_{j\in I}pe_j.$ If  $x^\alpha=x_1^{\alpha_1}\cdots x_n^{\alpha_n}$ is in $R^G$, then for all $I$ containing $\operatorname{supp}(\alpha),$ $x^{p_I-\alpha}$ is in $R^G.$ 
\end{Proposition}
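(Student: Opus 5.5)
We first check that $x^{p_I-\alpha}$ is a genuine monomial in $R$, and then show it is invariant by writing it as a quotient of two invariants. Here $p_I-\alpha$ has $j$-th coordinate $p-\alpha_j$ for $j\in I$ and $-\alpha_j$ for $j\notin I$. Since $I\supseteq\operatorname{supp}(\alpha)$, every $j\notin I$ has $\alpha_j=0$. For $j\in I$ we have $0\le\alpha_j<p$, so $p-\alpha_j\ge 1>0$. Hence all exponents are nonnegative and $x^{p_I-\alpha}\in R$.

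For invariance, I would use the multiplicativity of the action on monomials, $g\cdot(x^\beta x^\gamma)=(g\cdot x^\beta)(g\cdot x^\gamma)$, which the paper builds into its definition of the action on $\mathbb{K}[V]$. We have
$$x^{p_I}=x^{\alpha}\cdot x^{p_I-\alpha}.$$
For each generator $g_i$ of $G$, the action is diagonal, so $g_i\cdot x^{p_I-\alpha}=c\,x^{p_I-\alpha}$ for some scalar $c=\mu_p^{\sum_j w_{ij}(p_I-\alpha)_j}\in\CC^*$. Applying $g_i$ to the displayed identity and using Corollary~\ref{cor: purepowers 2by3} for $x^{p_I}$ and the hypothesis $x^\alpha\in R^G$ gives
$$x^{p_I}=g_i\cdot x^{p_I}=(g_i\cdot x^\alpha)(g_i\cdot x^{p_I-\alpha})=x^\alpha\cdot c\,x^{p_I-\alpha}=c\,x^{p_I}.$$
Therefore $c=1$. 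Since this holds for every generator $g_i$, and the $g_i$ generate $G$, the monomial $x^{p_I-\alpha}$ is fixed by all of $G$ and lies in $R^G$.

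Equivalently, in terms of weights, we need $W(p_I-\alpha)\equiv 0 \pmod p$. This holds because $Wp_I\equiv 0$ and $W\alpha\equiv 0 \pmod p$, the latter being the invariance of $x^\alpha$. Either route works. There is no real obstacle: the only point needing care is the nonnegativity of the exponents. That is exactly where the hypotheses $\alpha_j<p$ and $\operatorname{supp}(\alpha)\subseteq I$ are used, and without them one only gets an invariant in $\operatorname{Frac}(R)^G$, as in the preceding corollary.
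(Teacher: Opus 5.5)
Your proof is correct and follows the paper's argument: factor $x^{p_I}=x^\alpha x^{p_I-\alpha}$, apply $g_i$, use the invariance of $x^{p_I}$ and $x^\alpha$, and cancel. You also make explicit two points the paper leaves implicit: the check that $p_I-\alpha$ has nonnegative entries (which is where $\alpha_j<p$ and $\operatorname{supp}(\alpha)\subseteq I$ are used), and the use of the diagonal scalar $c$ to justify the cancellation.
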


\begin{proof}
    Observe that $$g_i(x^{p_I-\alpha})x^\alpha=g_i(x^{p_I-\alpha})g_i(x^\alpha)=g_i(x^{p_I})=x^{p_I}=x^{p_I-\alpha}x^{\alpha}.$$
    Therefore, $$g_i(x^{p_I-\alpha})=x^{p_I-\alpha}.$$
\end{proof}

More generally, given an invariant $x^{\alpha}$, we obtain a lattice of invariants from which we can extract nonnegative coordinates.
\begin{Proposition}
If $x^\alpha\in R^G$ then for all $\beta\in\ZZ^n$, $x^{\alpha+p\beta}\in Frac(R)^G.$ In particular, if $\alpha+p\beta\in\ZZ_{\geq0}^n,$ $x^{\alpha+p\beta}\in R^G$.
\end{Proposition}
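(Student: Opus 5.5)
The plan is to compute the action of each generator $g_i$ directly on the Laurent monomial $x^{\alpha+p\beta}$. We use the multiplicativity of the action on $\operatorname{Frac}(R)$, which is a field automorphism extending the diagonal action on $R$. First I would note that $x^{\alpha+p\beta}=x^{\alpha}\cdot x^{p\beta}$ as elements of $\operatorname{Frac}(R)$, where
$$x^{p\beta}=\prod_{j}(x_j^{p})^{\beta_j}.$$
This is a product of integer (possibly negative) powers of the pure powers $x_j^p$.

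By Lemma~\ref{lem: purepowers 2by3} each $x_j^p$ is invariant. Since $g_i$ acts as a ring automorphism of $R$, it extends uniquely to $\operatorname{Frac}(R)$ via $g_i(f/h)=g_i(f)/g_i(h)$. Hence $g_i\bigl((x_j^p)^{-1}\bigr)=(x_j^p)^{-1}$, and so $x^{p\beta}$ is invariant; this is exactly the preceding corollary for rational functions. Multiplicativity then gives
$$g_i(x^{\alpha+p\beta})=g_i(x^\alpha)\,g_i(x^{p\beta})=x^\alpha x^{p\beta}=x^{\alpha+p\beta}.$$
This holds for every generator $g_i$, and therefore for all of $G$, which proves the first claim.

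Alternatively, and as a cross-check, one can compute with weights. The generator $g_i$ scales $x^\gamma$ by $\mu_p^{\sum_j w_{ij}\gamma_j}$. For $\gamma=\alpha+p\beta$ the exponent is $\sum_j w_{ij}\alpha_j+p\sum_j w_{ij}\beta_j$, which agrees with $\sum_j w_{ij}\alpha_j$ modulo $p$. That exponent is $\equiv 0 \pmod p$ because $x^\alpha$ is invariant.

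For the second claim: if $\alpha+p\beta\in\ZZ_{\geq 0}^n$, then $x^{\alpha+p\beta}$ is an honest monomial in $R$. It is fixed by $G$ as an element of $\operatorname{Frac}(R)$, and the action on $\operatorname{Frac}(R)$ restricts to the original action on $R\subseteq\operatorname{Frac}(R)$. So $x^{\alpha+p\beta}\in R^G$.

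The only step needing care is justifying that the action extends to $\operatorname{Frac}(R)$ compatibly with the action on $R$. This is standard, since each $g$ acts by an injective ring automorphism of the domain $R$. No real obstacle is expected.
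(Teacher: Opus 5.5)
Your proof is correct and follows the route the paper intends. The paper gives no separate proof of this proposition; it is meant to follow from the preceding corollary that $x^{p\beta}\in\operatorname{Frac}(R)^G$ together with multiplicativity of the action, which is exactly your decomposition $x^{\alpha+p\beta}=x^{\alpha}x^{p\beta}$. Your weight computation modulo $p$ is a valid independent check, and it is essentially the same computation as in Lemma~\ref{lem: inv=ker}.
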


More generally, for any fixed $\alpha$ associated to an invariant monomial, we obtain a family of lattices.
\begin{Proposition}
    If $x^\alpha\in R^G$ then, for all $\beta\in \ZZ^n$ and $k\in \ZZ$ such that $k\alpha+p\beta \in\ZZ_{\geq 0}^n$, $x^{k\alpha+p\beta}$ is in the invariant ring.
\end{Proposition}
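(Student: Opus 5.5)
The plan is to reduce the statement to the fact that, for a diagonal action, invariance of a monomial is a congruence condition on its exponent vector modulo $p$. Since $G\cong(\ZZ_p)^k$ acts diagonally with weight matrix $W$, each generator satisfies $g_i\cdot x^\gamma=\mu_p^{\langle \mathbf{w}^i,\gamma\rangle}x^\gamma$ for any $\gamma\in\ZZ^n$. This holds verbatim for Laurent monomials in $\operatorname{Frac}(R)$, because the action is multiplicative and $g_i(x_j^{-1})=\mu_p^{-w_{ij}}x_j^{-1}$. Hence $x^\gamma$ is fixed by $g_i$ if and only if $\langle \mathbf{w}^i,\gamma\rangle\equiv 0 \pmod p$. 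Because the $g_i$ generate $G$, $x^\gamma$ is $G$-invariant if and only if $W\gamma\equiv 0\pmod p$.

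With this criterion the argument is a one-line linear computation. Suppose $x^\alpha\in R^G$, so that $W\alpha\equiv 0\pmod p$. For any $k\in\ZZ$ and $\beta\in\ZZ^n$ we have
$$W(k\alpha+p\beta)=k\,W\alpha+p\,W\beta\equiv 0 \pmod p.$$
Therefore $x^{k\alpha+p\beta}$ is invariant as an element of $\operatorname{Frac}(R)^G$. There is an alternative that avoids the criterion. Write $x^{k\alpha+p\beta}=(x^\alpha)^k\cdot x^{p\beta}$; here $(x^\alpha)^k$ is invariant for every integer $k$, since the inverse of an invariant unit in $\operatorname{Frac}(R)$ is invariant, and $x^{p\beta}$ is invariant by the earlier corollary on rational functions. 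The product of invariants is then invariant.

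Finally, I will use the hypothesis $k\alpha+p\beta\in\ZZ_{\ge 0}^n$. It says exactly that the Laurent monomial $x^{k\alpha+p\beta}$ is an honest monomial in $R$. Since $R^G=R\cap\operatorname{Frac}(R)^G$, where the action on $\operatorname{Frac}(R)$ extends the action on $R$, it follows that $x^{k\alpha+p\beta}\in R^G$.

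The only point needing care is the main obstacle, though a mild one. I must justify that the action extends to $\operatorname{Frac}(R)$ compatibly, so that negative powers behave as expected. Equivalently, in the direct approach, I must check that the weight formula $g_i\cdot x^\gamma=\mu_p^{\langle\mathbf{w}^i,\gamma\rangle}x^\gamma$ holds for $\gamma$ with negative entries. Both follow because each $g_i$ acts as a ring automorphism of $R$, hence of its fraction field.
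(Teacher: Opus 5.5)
Your proof is correct. The paper states this proposition without proof. The argument implicit in the surrounding text is your ``alternative'': $x^{p\beta}\in\operatorname{Frac}(R)^G$ by the corollary on rational functions, $(x^\alpha)^k$ is invariant for every $k\in\ZZ$ by multiplicativity of the action (the same cancellation trick the paper uses for $x^{p_I-\alpha}$), and one then intersects with $R$. Your primary route instead goes through the congruence criterion $W\gamma\equiv 0\pmod p$, which the paper only establishes later as Lemma~\ref{lem: inv=ker}. Its advantage is that it exhibits the invariant exponents as the full preimage of $\ker W_{\ZZ/p}$, a lattice containing $p\ZZ^n$, which is the viewpoint the seed-growth step actually exploits. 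One small caveat: the ``only if'' half of that criterion, which you use to pass from $x^\alpha\in R^G$ to $W\alpha\equiv 0\pmod p$, requires $\mu_p$ to be a primitive $p$-th root of unity, while the paper's seed-growth subsection only says ``some $p$th root of unity.'' Since $p$ is prime, the only other case is $\mu_p=1$, where everything is invariant, so this is harmless. By contrast, the multiplicative route needs no such case split, nor does the direct computation $\mu_p^{\langle \mathbf{w}^i,k\alpha+p\beta\rangle}=\bigl(\mu_p^{\langle \mathbf{w}^i,\alpha\rangle}\bigr)^k\bigl(\mu_p^{p}\bigr)^{\langle \mathbf{w}^i,\beta\rangle}=1$.
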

Moreover, for $\alpha$ large, we can step along the lattice to find candidates for a minimal generating set. 
\begin{Proposition}~\label{prop:p-piping}
    Let $x^\gamma$ be a monomial in the invariant ring $R^G$. Then there uniquely exists coordinates $t\in [0,p-1]^n$ and $\beta\in \ZZ^n$ such that $\gamma=p\beta+t$. In particular, $x^t\in R^G$. 
\end{Proposition}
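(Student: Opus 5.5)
The plan is to use division with remainder coordinatewise, and then to use the fact that the pure powers are invariant to pass invariance from $x^\gamma$ to $x^t$.

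First, existence and uniqueness. Write $\gamma=(\gamma_1,\ldots,\gamma_n)$ with each $\gamma_j\in\ZZ_{\geq 0}$. For each $j$, the division algorithm in $\ZZ$ gives unique integers $\beta_j$ and $t_j$ with $\gamma_j=p\beta_j+t_j$ and $0\le t_j\le p-1$. Setting $\beta=(\beta_1,\ldots,\beta_n)$ and $t=(t_1,\ldots,t_n)$ gives $\gamma=p\beta+t$ with $t\in[0,p-1]^n$. Uniqueness holds coordinatewise: if $p\beta+t=p\beta'+t'$, then $p\mid t_j-t'_j$ while $|t_j-t'_j|<p$, so $t=t'$ and hence $\beta=\beta'$. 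Because $\gamma_j\ge 0$, we in fact get $\beta\in\ZZ_{\geq0}^n$. This matters below: $x^{p\beta}$ is then an honest monomial in $R$, so we never need to leave $R$.

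Second, invariance of $x^t$. By Corollary~\ref{cor: purepowers 2by3}, or directly from Lemma~\ref{lem: purepowers 2by3} since products of invariants are invariant, the monomial $x^{p\beta}=\prod_j (x_j^p)^{\beta_j}$ lies in $R^G$. For each generator $g_i$, the action is multiplicative, so
\[
g_i(x^t)\,x^{p\beta}=g_i(x^t)\,g_i(x^{p\beta})=g_i(x^\gamma)=x^\gamma=x^t\,x^{p\beta}.
\]
Since $R$ is a domain and $x^{p\beta}\neq 0$, we can cancel and obtain $g_i(x^t)=x^t$ for all $i$. Therefore $x^t\in R^G$.

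Alternatively, one can apply the preceding proposition with $\alpha=\gamma$ and the lattice shift $-\beta$, using $t=\gamma+p(-\beta)\in\ZZ_{\geq0}^n$. Either route is straightforward. The only point needing care is the cancellation step, which requires $\beta\ge 0$ (or else an appeal to invariance in $\operatorname{Frac}(R)$). I do not expect a genuine obstacle.
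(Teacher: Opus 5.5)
Your proof is correct and is essentially the paper's argument: coordinatewise reduction modulo $p$ gives the unique decomposition $\gamma=p\beta+t$, and invariance of $x^t$ follows by cancelling the invariant pure-power factor $x^{p\beta}$, which the paper handles by citing the earlier lattice-shift propositions. Your observation that $\beta\ge 0$ lets the cancellation take place inside the domain $R$ rather than in $\operatorname{Frac}(R)$, which is a small but welcome tightening.
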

\begin{proof}
    Reduction modulo $p$ gives the desired unique decomposition of $\gamma$
    $$\gamma =p\beta +t\quad\text{with}\quad  t\in [0,p-1]^n.$$
    By the same arguments as above, $x^t$ is invariant.
\end{proof}
By Proposition~\ref{prop:p-piping}, every invariant monomial can be expressed as above. In particular, every element of a minimal generating set appears in this fashion. As a result, if we have a set of seed invariants, we can grow them in a set of minimal generators for the invariant ring. Next, we will discuss how to find such initial seeds with our seed generation algorithm. 

\subsection{Seed generation}
In Subsection~\ref{subsec: seed growth}, we described a method to recover a minimal generating set given a set of seeds. What remains is an algorithm to find a set of seeds. This is equivalent to finding a $\ZZ-$basis for the associated lattice of $R^G.$ 

\begin{Remark}
    For any $G=\ZZ_p^k$ acting on $R=\CC[x_1,\ldots,x_n]$, there exists a set of seeds $\mathcal{S}$ consisting of monomials. Equivalently, every associated lattice to some $R^G$ has a $\ZZ$-basis consisting of nonnegative coordinates.
\end{Remark}

Informally, if we run into any negative coordinates, we can add $p$ until we reach non-negative values. Formally, given an element $x^\alpha$ of a fixed $\ZZ-$basis $\mathcal{B}$, if $\alpha$ has negative coordinates on $I\subseteq[n],$ observe that $\widetilde{\alpha}:=\alpha+ \sum_{i \in I} (\alpha_i + c_i \,  p) \, e_i$ has nonnegative coordinates for large enough $c_i$. A quick check confirms that $\mathcal{B}-x^\alpha+x^{\widetilde{\alpha}}$ is still a $\ZZ-$basis. 
\begin{Definition}[Plucker coordinates]
    We define the \textit{plucker} (coordinate) $p_S$ on a matrix $M$ as the determinant of square submatrix of $M$ obtained by choosing the columns in the index set $S$. For any index $i$ in the index set $T$, we also define $p_{\underline i}$ to be the plucker $p_S$ associated to the index set $S=T \setminus\{i\}$.
\end{Definition}

\begin{Proposition}\label{construct-seeds}
Assume that the columns of $W$ are linearly independent over $\mathbb Z$. Find an index set $T\subset\{1,\dots,m\}$ with $|T|=n$ such that the square sub‑matrix $W_T$ has non-zero determinant. For every $i\notin T$ put $S_i=T\cup{i}$. Define $v_i\in\mathbb Z^{m}$ by the formula
$$ v_i = \sum_{a \in S_i}(-1)^{j(a)+1}e_{a}p_{\underline{a}}, \quad \textnormal{where $j(a)$ is the position of $a$ inside $S$.}$$
Then, \begin{enumerate}
    \item $Wv_i= \vec 0$ for all $i\notin T$, so $v_i\in\ker W$.
    \item The $m-n$ vectors $\{ v_i\mid i\notin T \}$ are linearly independent. 
\end{enumerate}
Consequently $\{ v_i\mid i\notin T \}$ is a $\mathbb Z$‑basis of $\ker W$, and $\dim(\ker W)=m-n$.
\end{Proposition}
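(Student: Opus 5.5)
The plan is to treat $W$ as an $n\times m$ integer matrix (rows indexed by the generators, columns by the variables), so that each $v_i$ is a generalized cross product built from the $n+1$ columns indexed by $S_i=T\cup\{i\}$. Both numbered claims should follow from elementary determinant identities. The final ``consequently'' then follows from rank--nullity together with a comparison of lattices.

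For part (1), fix $i\notin T$ and a row index $r\in\{1,\dots,n\}$. Form the $(n+1)\times(n+1)$ matrix whose first row is the $r$-th row of $W$ restricted to the columns $S_i$, and whose remaining $n$ rows are the whole submatrix $W_{S_i}$. This matrix has two equal rows, so its determinant is zero. Expanding the determinant along the first row (Laplace expansion) gives
\[
0=\sum_{a\in S_i}(-1)^{j(a)+1}\,w_{ra}\,p_{\underline a},
\]
because the minor obtained by deleting the first row and the column in position $j(a)$ is exactly $W_{S_i\setminus\{a\}}$, whose determinant is $p_{\underline a}$. The right-hand side is precisely the $r$-th entry of $Wv_i$. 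Since $r$ was arbitrary, $Wv_i=\vec 0$.

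For part (2), look only at the coordinates outside $T$. For $a=i$ we have $S_i\setminus\{i\}=T$, so the $i$-th coordinate of $v_i$ is $\pm p_{\underline i}=\pm\det W_T\neq 0$. Every other coordinate $\ell\notin T$ with $\ell\neq i$ is $0$, because $\ell\notin S_i$. Hence the $(m-n)\times(m-n)$ matrix of these coordinates, with rows indexed by the $v_i$ and columns by the indices not in $T$, is $\det W_T$ times a signed identity matrix. It is therefore invertible, and the $v_i$ are linearly independent.

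For the consequence, $\det W_T\ne0$ gives $\operatorname{rank}W=n$, so rank--nullity over $\mathbb{Q}$ gives $\dim\ker W=m-n$. The $m-n$ independent vectors $v_i$ therefore form a $\mathbb{Q}$-basis of $\ker W$.

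I expect the passage to a $\mathbb{Z}$-basis to be the main obstacle, because independence alone does not force the $v_i$ to span the lattice $\ker W\cap\mathbb{Z}^m$. Let $u\in\ker W\cap\mathbb{Z}^m$ and put $c_i=u_i/(\pm\det W_T)$. The vector $u-\sum_i c_iv_i$ lies in $\ker W$ and is supported on $T$. Because $W_T$ is invertible, this vector must be zero. So $u$ lies in the $\mathbb{Z}$-span of the $v_i$ exactly when every $c_i$ is an integer. This always holds when $\det W_T=\pm1$. In general it only shows that $\{v_i\}$ spans a sublattice of $\ker W\cap\mathbb{Z}^m$ whose index divides a power of $\det W_T$.

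I would therefore first look for a choice of $T$ with $\det W_T=\pm1$ and argue that it gives a genuine $\mathbb{Z}$-basis. Failing that, I would state the conclusion as a rational basis, or a full-rank sublattice, that must then be saturated. If the lattice needed for seed generation is the mod-$p$ kernel rather than the integer kernel, the relevant condition becomes $p\nmid\det W_T$; that fits the paper's reduction-mod-$p$ growth step.
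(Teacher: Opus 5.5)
Your arguments for (1) and (2) match the paper's proof. For (1) you use the Laplace expansion of a matrix with a repeated row, which is what the paper's displayed determinant means, though the paper writes it more loosely. For (2) you observe that coordinate $i$ of $v_i$ is $\pm\det W_T\neq0$ while every other $v_\ell$ vanishes there.

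You part ways with the paper at the final ``consequently'', and there you are right and the paper is not. The paper deduces the $\ZZ$-basis claim from (1), (2) and rank--nullity, but rank--nullity only gives a basis over a field. Read the hypothesis as intended, namely independent rows; independent columns would force $m=n$ and make the statement vacuous. Then the integral claim is false. For $W=(2\;\;2)$, both choices of $T$ give $v=(2,-2)$, while $(1,-1)\in\ker W\cap\ZZ^2$ is not in its $\ZZ$-span. No $T$ with $\det W_T=\pm1$ exists there, so your first fallback cannot always be arranged. The correct integral statement is your finite-index one, and the index in fact divides $(\det W_T)^{m-n}$.

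Your closing remark gives the repair that the seed-generation step actually needs, because whether $x^\alpha$ is invariant depends only on $\alpha \bmod p$. Suppose $p\nmid\det W_T$. Then your two arguments reduced mod $p$, together with rank--nullity over $\ZZ/p$, show that the reductions of the $v_i$ form a basis of $\ker W_{\ZZ/p}$. Together with the $pe_j$, they generate the exponent lattice of invariant monomials.

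Nonvanishing of $\det W_T$ over $\ZZ$ is not enough for this. Take $p=3$ and $W=\begin{pmatrix}1&2&1&1\\2&1&0&0\end{pmatrix}$ with $T=\{1,2\}$, so $\det W_T=-3\neq0$. Then $v_3=(-1,2,-3,0)$ and $v_4=(-1,2,0,-3)$ both reduce to $(2,2,0,0)$, so $(0,0,1,2)\in\ker W_{\ZZ/3}$ is missed.
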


\begin{proof}
    First, we show that for all $i \not\in T,$ $Wv_i = 0$ by showing that for all row vectors $r_k \in W,$ $r_k \cdot v_i = 0.$ Letting $r_{i,k}$ denote the restriction of the vector $r_k$ to the columns indexed by $T \cup i,$ we have that \[v_i \cdot r_k = \det\begin{pmatrix}
        r_{1,i} \\ 
        r_{2,i} \\
        \vdots \\
        r_{n,i} \\
    \end{pmatrix} = 0,\] hence $Wv_i = 0.$
    
    Now, we prove (2), and we note that the conclusion that the $\{v_i\}$ form a basis for $\ker W$ follows from (1), (2), and the rank-nullity theorem. Let $v_k$ and $v_{\ell}$ be two vectors in this set with $k \neq \ell$. Observe that $k \not\in T \cup {\ell}$ and $\ell \not\in T \cup k.$ Moreover, since $p_T=\det(W_T) \neq 0,$ $p_{\underline{i}} \neq 0$ for all $i \not\in T.$ Therefore, $v_{k}$ must have a nonzero value in position $k$, while $v_{\ell}$ must have a zero in position $k.$ Thus, since $v_k$ has a nonzero $k$-th coordinate while all other $v_i$ have zero $k$-th coordinate, the set $\{v_i\}$ must be linearly independent over $\ZZ$.
\end{proof}

\begin{Example}\label{ex:seed-construction}
Let $W = \begin{pmatrix}
    1  & 1 &1 &1 \\
    0 & 0 & 1 & 1
\end{pmatrix}$. We want to find a $\ZZ$-basis of $\ker W$. We see that submatrix $W_{23}$ has a non-zero determinant.
We will extend the submatrix $W_{23}$ into a matrix with nullity 1, by adding the other column vectors, which yields two submatrices, namely $W_{123},$ and $W_{234}$. We compute the corresponding seeds in this example.
\begin{align*}
    v_{W_{123}} &= \sum_{a_i\in\{1,2,3\}}(-1)^{i+1}e_{a_i}p_{\underline{a_i}} \\
    &= e_1\begin{vmatrix}
        1 & 1 \\ 0 & 1
    \end{vmatrix} -e_2 \begin{vmatrix}
        1 & 1 \\ 0 & 1
    \end{vmatrix} +e_3\begin{vmatrix}
        1 & 1 \\ 0 & 0
    \end{vmatrix} = \begin{pmatrix}
        1 \\ -1 \\ 0 \\ 0
    \end{pmatrix}.
\end{align*}
Whilst,
\begin{align*}
    v_{W_{234}} &= \sum_{a_i\in\{1,2,3\}}(-1)^{i+1}e_{a_i}p_{\underline{a_i}} \\
    &= e_2\begin{vmatrix}
        1 & 1 \\ 1 & 1
    \end{vmatrix} -e_3 \begin{vmatrix}
        1 & 1 \\ 0 & 1
    \end{vmatrix}+e_4 \begin{vmatrix}
        1 & 1 \\ 0 & 1
    \end{vmatrix} = \begin{pmatrix}
        0 \\ 0 \\ -1 \\ 1
    \end{pmatrix}.
\end{align*}
Lastly, we reduce mod p to create our basis for our kernel:
$$v_{W_{123}}=(1,p-1,0,0), \quad v_{W_{234}}=(0,0,p-1,1).$$ 

Observe that our method places zeros in places such that our basis is always linearly independent. In particular, we have that $$W \, v_{W_{123}} = (p,0) \equiv (0,0) \pmod p, \quad W \, v_{W_{234}} = (p,p) \equiv (0,0) \pmod p.$$ So our seeds are invariant.
\end{Example}

\begin{Remark}
    In the above situation and with the notation of Proposition \ref{construct-seeds}, since we assumed the $n$ rows of $W$ are linearly independent over $\ZZ$, a subset $T$ always exists. 
\end{Remark}

\subsection{Connecting invariants to the kernel}

The elements of the kernel of the weight matrix give us invariant monomials. If $W$ is a full rank $n\times k$ matrix, we get $n-k$ seeds which are linearly independent of each other, and so give us minimal monomial generators for the invariant ring. Notice that we also get $n$ other monomials, the pure powers, which are also minimal, as their support consists of only one variable. We want to take these $n+(n-k)$-elements and show there exists a $\ZZ$-basis of the sub-lattice within them.

    \begin{Lemma}~\label{lem: inv=ker}
        Let $W$ be full rank $n\times k$ weight matrix representing the action of $G=(\ZZ/p)^k$ acting on $R=\CC[x_1,\ldots,x_n].$ Then observe that the monomial $x^\alpha$ is invariant if and only if $\overline{\alpha}\in \ker W_{\ZZ/p}$.
    \end{Lemma}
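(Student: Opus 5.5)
The plan is to compute the action of each generator on $x^\alpha$ directly, using the diagonal description of the action, and then read off invariance as a congruence modulo $p$. Since $G=(\ZZ/p)^k$ is generated by $g_1,\ldots,g_k$, a monomial is invariant under $G$ exactly when it is fixed by each $g_i$. The condition for $g_i$ should come out as the vanishing of the $i$-th coordinate of $W\alpha$ modulo $p$.

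First, fix a primitive $p$-th root of unity $\mu_p$. By the setup of the weight matrix, $g_i\cdot x_j=\mu_p^{w_{ij}}x_j$. Because the action respects multiplication, as in the proof of Lemma~\ref{lem: purepowers 2by3}, we get
\[
g_i\cdot x^\alpha=\prod_{j=1}^n \left(\mu_p^{w_{ij}}x_j\right)^{\alpha_j}=\mu_p^{\sum_j w_{ij}\alpha_j}\,x^\alpha=\mu_p^{(W\alpha)_i}\,x^\alpha ,
\]
where $(W\alpha)_i$ is the $i$-th entry of $W\alpha$, computed with integer representatives of the weights. Since $x^\alpha\neq 0$, the equality $g_i\cdot x^\alpha=x^\alpha$ holds if and only if $\mu_p^{(W\alpha)_i}=1$. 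Because $\mu_p$ is primitive of order $p$, this happens if and only if $p\mid (W\alpha)_i$.

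Next, note that $(W\alpha)_i \bmod p$ depends only on $\overline{\alpha}$ and on the classes of the $w_{ij}$ in $\ZZ/p$. So the condition $p\mid (W\alpha)_i$ for all $i$ is exactly $W_{\ZZ/p}\,\overline{\alpha}=0$, that is, $\overline{\alpha}\in\ker W_{\ZZ/p}$. Running the argument in both directions gives the equivalence. For the forward direction we only need invariance under the generators $g_i$. For the converse, invariance under the generators extends to invariance under all of $G$ because the $g_i$ generate $G$ and the action is by ring automorphisms.

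I expect no real obstacle here. The only points needing care are bookkeeping: the weights $w_{ij}$ are defined only modulo $p$, so one must check that the exponent of $\mu_p$ is well defined, which holds because $\mu_p^p=1$. The primitivity of $\mu_p$ is what gives the ``only if'' direction. The full-rank hypothesis is not needed for this equivalence itself; it matters only later, when counting seeds.
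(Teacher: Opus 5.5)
Your proof is correct and follows the paper's argument: compute $g_i\cdot x^\alpha=\mu_p^{W_i\cdot\alpha}x^\alpha$, reduce invariance to $W_i\cdot\alpha\equiv 0 \pmod p$ for each generator $g_i$, and rephrase this as $\overline{\alpha}\in\ker W_{\ZZ/p}$. Your added remarks (that $\mu_p$ must be primitive, that the exponent is well defined modulo $p$, and that the full-rank hypothesis is not needed) only make explicit what the paper leaves implicit.
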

    \begin{proof}
        Recall that $x^\alpha$ is invariant if and only if for all $i\in[k],$ $g_i x^\alpha=x^\alpha.$ Let $W_i$ denote the $i$th row of $W.$ Then notice that $$g_i x^\alpha =\mu_p^{W_i\cdot \alpha}x^\alpha=x^\alpha \quad \text{if and only if}\quad W_i\cdot \alpha \equiv 0\pmod{p}\quad\text{for all $i$}.$$
        The right-hand side becomes the desired result when put into matrix language. 
    \end{proof}

        \begin{Proposition}\label{intgenset}
        If $W$ is an $n\times k$ matrix, we get a set of $\dim ker(W_{\ZZ/p})$ many seeds $\mathcal{S}$ and $\mathcal{S}\cup\{pe_i:i\in[n]\}\subseteq \ZZ^n$ is a $\ZZ-$generating set of $N.$
    \end{Proposition}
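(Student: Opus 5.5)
Here $N$ denotes the lattice $\{\alpha\in\ZZ^n : W\alpha\equiv 0 \pmod p\}$, i.e.\ the preimage of $\ker W_{\ZZ/p}$ under reduction $\pi:\ZZ^n\to(\ZZ/p)^n$. By Lemma~\ref{lem: inv=ker}, this is exactly the set of exponent vectors of invariant Laurent monomials. The plan is a standard exact-sequence argument. The map $\pi$ restricts to a surjection $N\to\ker W_{\ZZ/p}$ whose kernel is $p\ZZ^n=\langle pe_1,\dots,pe_n\rangle$. Hence $N$ is generated by $\{pe_i\}$ together with any set of lifts of a basis of $\ker W_{\ZZ/p}$. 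The proof thus reduces to showing that the seeds $\mathcal{S}$ reduce mod $p$ to a basis of $\ker W_{\ZZ/p}$.

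\textbf{Step 1: constructing the seeds.} Since the rows of $W$ are linearly independent mod $p$ (the action is faithful), $\operatorname{rank} W_{\ZZ/p}=k$, so $\dim\ker W_{\ZZ/p}=n-k$. Choose $T$ with $|T|=k$ and $\det W_T\not\equiv 0 \pmod p$. Build $v_i$ for $i\notin T$ by the Plücker formula of Proposition~\ref{construct-seeds}. The cofactor-expansion argument there gives $Wv_i=0$ over $\ZZ$, hence also mod $p$.

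\textbf{Step 2: independence mod $p$.} The $i$-th coordinate of $v_i$ is $\pm\det W_T$, which is a unit mod $p$. For $j\ne i$ with $j\notin T$, the $j$-th coordinate of $v_i$ is $0$. Restricting to the coordinates outside $T$ therefore gives a diagonal matrix with unit entries, so the $\overline{v_i}$ are independent over $\ZZ/p$. There are $n-k$ of them, so they form a basis of $\ker W_{\ZZ/p}$. (Alternatively, one can row reduce $W_{\ZZ/p}$ to $[I\mid A]$ on columns $T$ and take $v_i=e_i-\sum_{t\in T}A_{ti}e_t$.) Replacing each $v_i$ by its representative in $[0,p-1]^n$, as in Proposition~\ref{prop:p-piping} and the preceding Remark, produces genuine invariant monomials without changing the residues mod $p$. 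These representatives form $\mathcal{S}$.

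\textbf{Step 3: generation.} Take $\alpha\in N$. Then $\overline{\alpha}=\sum_i c_i\overline{v_i}$ for some $c_i\in\ZZ$, so $\alpha-\sum_i c_iv_i\in p\ZZ^n$. It is therefore an integer combination of the $pe_j$, which shows that $\mathcal{S}\cup\{pe_i\}$ generates $N$. The reverse containment is immediate, since each seed and each $pe_i$ lies in $N$.

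\textbf{Main obstacle.} The delicate point is Step 1: one must choose $T$ with $\det W_T$ nonzero \emph{mod $p$}, not merely over $\ZZ$. Proposition~\ref{construct-seeds} only provides the integer version, and a nonzero integer determinant that is divisible by $p$ would break the independence argument. The fix is to pick $T$ by Gaussian elimination over $\ZZ/p$, using full rank mod $p$, which follows from faithfulness.
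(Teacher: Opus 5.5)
Your argument is correct and is essentially the paper's: reduce $\alpha\in N$ modulo $p$, express the residue as a combination of the seeds, and absorb the difference into $p\ZZ^n=\langle pe_1,\dots,pe_n\rangle$. You state this more carefully than the paper, which writes $\overline{\alpha}=\sum c_i\alpha_i$ as if it held in $\ZZ^n$ when it only holds modulo $p$; and your Steps 1--2 prove what the paper takes for granted, namely that the seeds reduce to a basis of $\ker W_{\ZZ/p}$, rightly requiring $\det W_T\not\equiv 0 \pmod p$ rather than merely $\det W_T\neq 0$ over $\ZZ$.
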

    \begin{proof}
        By Lemma~\ref{lem: inv=ker}, $\alpha \in N$ if and only if its reduction modulo $p$ is in the kernel of $W_{\ZZ/p}.$ In particular, if $\alpha \in N$ then  $\alpha=\overline{\alpha}+p\beta$ for some $\beta\in\ZZ^n$ such that $\overline{\alpha}\in\ker W_{\ZZ/p}$, i.e., $\overline{\alpha}=\sum_{i=1}^{n-k} c_i\alpha_i$ for $0\leq c_i\leq p-1.$ So $\mathcal{S}\cup \{pe_i\}$ generates $N.$
    \end{proof}
    
        Proposition~\ref{intgenset} gives us that the cardinality of a set of seeds is equal to the dimension of $\ker(W_{\ZZ/p})$. We can translate our linearly independent seeds in monomial exponents by considering them just as an integer exponents. We show that going from $(\ZZ/p)^n$ to $\ZZ^n$, our seeds stay linearly independent. More formally, we prove the following proposition.
      \begin{Proposition}~\label{prop: Gauche}
        Let $\mathcal{S}\subseteq (\ZZ/p)^n$ be a $\ZZ/p$-linearly independent set. Then $\mathcal{S}$ as a subset of $\ZZ^n$ under the embedding 
        \footnote{Such an embedding ``forgets'' some of the obvious structure and it is referred to as a ``Gauché'' mapping. 
        }
        $(\ZZ/p)^n\cong [0,p-1]^n\hookrightarrow \ZZ^n$ is a $\ZZ-$ linearly independent set with respect to $\ZZ^n$.
    \end{Proposition}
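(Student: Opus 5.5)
We argue by contraposition, using the standard fact that over a domain linear independence can be detected by reduction modulo a prime once the coefficients are normalized. Let $\mathcal{S}=\{s_1,\ldots,s_r\}\subseteq[0,p-1]^n$, viewed in $\ZZ^n$. Suppose there is a nontrivial relation $\sum_{i=1}^r c_i s_i=0$ with $c_i\in\ZZ$. We will produce a nontrivial $\ZZ/p$-relation among the reductions $\overline{s_i}$, contradicting the hypothesis.

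The key step is to normalize the coefficients. Let $d=\gcd(c_1,\ldots,c_r)$. Since not all $c_i$ vanish, $d\geq 1$, and dividing gives a new integer relation $\sum_i c_i' s_i=0$ with $\gcd(c_1',\ldots,c_r')=1$. In particular, not every $c_i'$ is divisible by $p$: otherwise $p$ would divide their gcd. (Equivalently, one could iterate, dividing by $p$ while all coefficients are multiples of $p$, which terminates because $\ZZ$ is a UFD.)

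Next, reduce the relation modulo $p$. Reduction $\ZZ^n\to(\ZZ/p)^n$ is a group homomorphism, and it sends each $s_i$ to the element of $\mathcal{S}\subseteq(\ZZ/p)^n$ it came from under the identification $(\ZZ/p)^n\cong[0,p-1]^n$. We therefore get $\sum_i \overline{c_i'}\,\overline{s_i}=0$ in $(\ZZ/p)^n$ with some $\overline{c_i'}\neq 0$. This contradicts the $\ZZ/p$-linear independence of $\mathcal{S}$.

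The only delicate point is the normalization step. Without it, a relation whose coefficients are all divisible by $p$ reduces to the trivial relation, and no contradiction follows. Once the coefficients are scaled so that their gcd is $1$, everything else is formal. One should also note that the embedding is injective, so distinct elements of $\mathcal{S}$ remain distinct in $\ZZ^n$ and the counting is unaffected. The same argument shows $\QQ$-linear independence, since any rational relation can be cleared of denominators.
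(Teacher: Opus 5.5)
Your proof is correct and follows essentially the same route as the paper: you normalize a nontrivial integer relation so that some coefficient is not divisible by $p$, then reduce modulo $p$ to contradict $\ZZ/p$-independence. The only difference is cosmetic. The paper first handles the case where some coefficient is already prime to $p$, and otherwise divides by $p^k$ with $k$ the minimal $p$-adic valuation among the nonzero coefficients; you divide by the full gcd, which covers both cases at once.
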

    \begin{proof}
        Say $\mathcal{S}\subseteq (\ZZ/p)^n$ is $\ZZ/p-$linearly independent and assume for contradiction that under the Gauché embedding it is not $\ZZ$-linearly independent, i.e., there exists $c_\alpha\in\ZZ$ such that $$\sum_{\alpha\in\mathcal{S}}c_\alpha \alpha=0.$$
        Now, if there exists some $\alpha$ where $c_\alpha\not\equiv 0\pmod{p}$, then immediately reducing modulo $p$ gives us a nonzero  $\ZZ/p$-linear combination of zero, i.e., $\mathcal{S}$ is not $\ZZ/p$-linearly independent, which is a contradiction. On the other hand, if all $c_\alpha\equiv 0\pmod{p}$, then for $c_\alpha \neq 0$ we can write $c_\alpha= p^{k_\alpha}\cdot m_\alpha$ where $k_\alpha\geq 1$ and $p\nmid m_\alpha.$ Let $k=\min\{k_\alpha:\alpha\}$. Then, observe that 
        $$\sum_{\alpha\in\mathcal{S}}\frac{c_\alpha}{p^k} \alpha=0$$
        with $c_\alpha/p^k\in \ZZ$ and there is some $\alpha $ with $c_\alpha \not\equiv 0\pmod{p}$.  Again, reducing modulo $p$ gives a nontrivial $\ZZ/p$-linear combination of zero which is a contradiction.
    \end{proof}

\section{Future Directions}~\label{sec:future}
The results in Section ~\ref{sec:results} allow us to create a new algorithm for elementary abelian groups, so groups of the type $(\ZZ/p)^k$. Naturally, one might want to consider other $p-$groups or at least groups of the type $(\ZZ/p^m)^k$ for $m>1$. In this section we will start in that direction by showing  how we can use Nakayama's Lemma to connect our elementary case to the $(\ZZ/p^m)^k$ case, as well as discuss some related reductions.

\subsection{Faithful representations}
First, we will address a simple reduction. If $\phi: \ZZ/(p^k)\rightarrow \GL(V)$ is not faithful, i.e., $H=\ker\phi\neq \{e\}$, then the weight matrix $W$ has linearly dependent rows. In this case, one can remove rows until we obtain a faithful representation of $G/H$ on $V$ and a new full rank weight matrix $W'$ with  $\rk(W)=\rk(W')$. Since the invariant ring depends only on the kernel of the weight matrix, this reduction does not change $R^G$. Thus, we may assume without loss of generality that the representation is faithful.

\subsection{Extending to other \textit{p}-groups}

For groups $G\cong (\ZZ/p^m)^k$ with $m>1$. Observe that the ring $R=\ZZ_{p^m}$ is a local ring, and so Nakayama's Lemma applies.

\begin{Lemma}[Nakayama's Lemma]
    Let $M$ be a finitely generated module over a local ring $R$ with maximal ideal $\mm$. If $M/\mm M$ is a vector space over $R/\mm$, then a basis of $M/\mm M$ lifts to a minimal set of generators of $M$.
\end{Lemma}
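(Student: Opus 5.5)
We prove the statement in the standard form it is meant to express. Let $(R,\mm)$ be local, let $M$ be finitely generated, and let $\overline{y}_1,\ldots,\overline{y}_r$ be a basis of the $R/\mm$-vector space $M/\mm M$. Note that $M/\mm M$ is always such a vector space, since $\mm$ annihilates it. Choose lifts $y_1,\ldots,y_r\in M$ and set $N=Ry_1+\cdots+Ry_r$. There are two claims to prove: (i) $N=M$, and (ii) no proper subset of $\{y_1,\ldots,y_r\}$ generates $M$.

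For (i), the images of the $y_i$ span $M/\mm M$, so $M=N+\mm M$. Passing to $Q=M/N$, which is finitely generated, this says $\mm Q=Q$, and we must deduce $Q=0$. This is the only real step, and I would do it with the determinant trick. Write $Q=Rq_1+\cdots+Rq_s$. Since $\mm Q=Q$, we have $q_i=\sum_j a_{ij}q_j$ with $a_{ij}\in\mm$. Hence $(I-A)\mathbf{q}=0$, where $A=[a_{ij}]$. Multiplying by the adjugate of $I-A$ gives $\det(I-A)\,q_i=0$ for all $i$. Expanding, $\det(I-A)=1+a$ with $a\in\mm$, so $\det(I-A)$ lies outside $\mm$. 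Because $R$ is local, it is therefore a unit, and so every $q_i=0$, i.e.\ $Q=0$.

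In the case actually used in this paper, $R=\ZZ/p^m$ with $\mm=(p)$, the argument is even simpler because $\mm^m=0$. Substituting $M=N+\mm M$ into itself repeatedly gives
$$M=N+\mm M=N+\mm^2M=\cdots=N+\mm^mM=N.$$
I would mention this as a remark, since it avoids the adjugate entirely.

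For (ii), suppose some proper subset, say with $r'<r$ elements, generated $M$. Reducing modulo $\mm M$, its images would span $M/\mm M$. But $M/\mm M$ has dimension $r$, so it cannot be spanned by fewer than $r$ vectors, which is a contradiction. Thus the lifts form a minimal generating set. As a by-product, every minimal generating set has exactly $\dim_{R/\mm} M/\mm M$ elements.

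The only potential obstacle is the use of locality in (i), namely that $1+a$ is a unit whenever $a\in\mm$. This holds because an element outside the unique maximal ideal lies in no maximal ideal, and so it is invertible.
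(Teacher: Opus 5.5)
The paper does not prove this lemma: it quotes Nakayama's Lemma as a standard fact and uses it only for $R=\ZZ/p^m$ with $\mm=(p)$, so there is no argument of the authors' to compare yours against.

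Your proof is the standard one and it is correct. The equation $M=N+\mm M$ becomes $\mm Q=Q$ for the finitely generated module $Q=M/N$. The determinant trick then gives $\det(I-A)\,Q=0$, and $\det(I-A)\equiv 1 \pmod{\mm}$ is a unit because $R$ is local. Minimality holds because fewer than $r$ elements cannot span the $r$-dimensional space $M/\mm M$. You are also right that the hypothesis ``$M/\mm M$ is a vector space over $R/\mm$'' is automatic.

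Your remark for $\ZZ/p^m$ gives the most natural proof for the paper's only use case. Since $\mm^m=0$ there, iterating $M=N+\mm M$ yields $M=N$ directly, with no adjugate and no appeal to locality.

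One small point goes beyond the statement. Your closing ``by-product'' says every minimal generating set has exactly $\dim_{R/\mm} M/\mm M$ elements, but (ii) only gives ``at least''. The missing line: from a minimal generating set, choose elements whose images form a basis of $M/\mm M$. By (i) these already generate $M$, so by minimality they are the whole set.
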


We now apply this in the context of weight matrices over $\ZZ_{p^k}$.
\begin{Proposition}
    Let $R = \ZZ_{p^k}$ with maximal ideal $\mm=(p)$, and let $A$ be a weight matrix with entries in $R$, say $A: R^n \to R^m$. Let $\bar{A} = A \pmod{p}$ denote the reduction of $A$ modulo $p$. Then
    $$\mathrm{rank}(\ker(A)) \leq \mathrm{rank}(\ker(\bar{A})),$$
    and the generators of $\ker(A)/(p) \subseteq \ker(\bar{A})$ can all be lifted to generate $\ker(A)$.
\end{Proposition}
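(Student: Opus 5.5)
The plan is to reduce both sides of the inequality to invariants of a Smith normal form, and then obtain the lifting statement directly from Nakayama's Lemma. Throughout, $R=\ZZ_{p^k}$ and $\mm=(p)$. Since $R$ is a local principal ideal ring, I read ``$\mathrm{rank}$'' of a finitely generated $R$-module $M$ as its minimal number of generators $\mu(M)$. By Nakayama's Lemma this equals $\dim_{\ZZ/p} M/\mm M$. For $\ker(\bar A)\subseteq (\ZZ/p)^n$ it is the usual dimension.

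\textbf{Step 1 (finiteness and lifting).} $\ker(A)\subseteq R^n$ is a submodule of a finitely generated module over the Noetherian ring $R$, so it is finitely generated. Nakayama's Lemma then says that any $\ZZ/p$-basis of $\ker(A)/p\ker(A)$ lifts to a minimal generating set of $\ker(A)$. This is the second assertion, once ``$\ker(A)/(p)$'' is read as $\ker(A)/p\ker(A)$. Its natural map to $(\ZZ/p)^n$ lands in $\ker(\bar A)$, because $Av=0$ implies $\bar A\bar v=0$.

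\textbf{Step 2 (Smith normal form).} Lift $A$ to an integer matrix and take its Smith normal form over $\ZZ$. Reducing modulo $p^k$ gives invertible $U\in\GL_m(R)$ and $V\in\GL_n(R)$ with $A=UDV$. Here $D$ is diagonal with entries $p^{e_1},\dots,p^{e_r}$ (with $0\le e_i\le k$, where $p^k=0$), followed by zero columns. Since $U$ and $V$ are invertible, $\ker(A)=V^{-1}\ker(D)$, so $\mu(\ker A)=\mu(\ker D)$. Likewise $\bar A=\bar U\bar D\bar V$ with $\bar U,\bar V$ invertible over $\ZZ/p$, so $\dim\ker\bar A=\dim\ker\bar D$. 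The kernel of $D$ splits coordinatewise:
\begin{itemize}
\item a diagonal entry $p^{e}$ with $e=0$ contributes $0$;
\item an entry with $e\ge1$ contributes the nonzero cyclic module $p^{k-e}R$;
\item each zero column contributes a copy of $R$.
\end{itemize}
Hence $\mu(\ker D)=n-\#\{i: e_i=0\}$. On the other hand, $\bar D$ is diagonal with a $1$ exactly where $e_i=0$ and $0$ elsewhere, so $\dim\ker\bar D=n-\#\{i:e_i=0\}$. This gives the inequality, in fact with equality.

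\textbf{Main obstacle.} The delicate point is that the natural map $\ker(A)/p\ker(A)\to\ker(\bar A)$ is generally \emph{not} injective. An element of $\ker A$ lying in $pR^n$ but not in $p\ker A$ maps to $0$; for example, when $e_i\ge 2$ the generator $p^{k-e_i}$ of the kernel summand reduces to $0$ modulo $p$. So a naive argument by inclusion fails, and this is why I pass through the Smith normal form to count generators instead. I would also remark that the paper's notation $\ker(A)/(p)\subseteq\ker(\bar A)$ should be interpreted via this (not necessarily injective) natural map, or else replaced by the image of $\ker(A)$ in $(\ZZ/p)^n$. In the latter case one only gets that this image has dimension at most $\dim\ker\bar A$, and the lifting has to be done from $\ker(A)/p\ker(A)$ as in Step 1.
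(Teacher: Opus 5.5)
Your proof is correct, but it takes a different route from the paper's. Both arguments get the lifting claim, and the identity $\mathrm{rank}(\ker A)=\dim_{\ZZ/p} M/\mm M$ for $M=\ker A$, from Nakayama's Lemma.

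For the inequality, the paper argues differently. It asserts that the reduction map $\pi\colon M\to\ker\bar A$ has kernel exactly $\mm M$, so that $M/\mm M$ embeds in $\ker\bar A$. That assertion is false in general, because $\ker\pi=M\cap pR^n$, which can be strictly larger than $pM$. For $k=2$ and $A=(p)$ one has $M=pR$, $pM=0$ and $\pi=0$, while $M/\mm M\cong\ZZ/p$. This is precisely the non-injectivity you point out.

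Your Smith coordinates show exactly when the paper's step works. The equality $\ker\pi=\mm M$ holds precisely when no invariant factor $p^{e_i}$ has $1\le e_i\le k-1$. This is the case, for instance, for the weight matrix of a faithful action, where all invariant factors are units. So the paper's one-line embedding argument is valid only in that situation. Your normal-form count costs a little computation, but it works for every $A$ and even yields the equality $\mu(\ker A)=\dim\ker\bar A$.

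Two small corrections are needed:
\begin{itemize}
\item The kernel generator $p^{k-e_i}$ reduces to $0$ modulo $p$ exactly when $1\le e_i\le k-1$, not when $e_i\ge 2$. For $e_i=k$ the generator is $1$, and the case $e_i=1$ (with $k\ge2$) is missed by your condition.
\item After reducing the integer Smith form modulo $p^k$, you should absorb the prime-to-$p$ parts of the invariant factors into $U$. Only then are the diagonal entries of $D$ really powers of $p$.
\end{itemize}
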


\begin{proof}
    Let $R = \ZZ_{p^m}$ with $\mm=(p).$ Since $\ZZ_{p^m}$ is a local ring with unique maximal ideal $\mm$, Nakayama's Lemma applies. Let $M = \ker(A)$. We have $R/\mm = \ZZ_{p^m}/(p)\cong \ZZ_p$, so $M/\mm M$ is a vector space over $R/\mm$.

    We have the exact sequence
    $$ 0 \to M \to R^n \overset{A}\to R^m.$$
    Reducing modulo $\mm$ gives
    $$0 \to M/\mm \to R^n/\mm \overset{\bar A}{\to} R^m/\mm,$$
    which is isomorphic to
    $$0 \to M/\mm \to \ZZ_p^n \overset{\bar A}{\to} \ZZ_p^m.$$
    By Nakayama's Lemma, the generators of $M/\mm$ lift to generators of $\ker(A)$, so $\mathrm{rank}(\ker(A))=\mathrm{rank}(M/\mm)$.

    Now observe that the map $\pi:\ker(A) \to \ker(\bar{A})$, defined by $\pi(a) := a \pmod{p}$, has kernel $\mm M = (p)M$. By the First Isomorphism Theorem, the image of $\pi$ is isomorphic to $M/\mm M$, so $M/\mm \subseteq \ker(\bar{A})$ and $\mathrm{rank}(M/\mm) \leq \mathrm{rank}(\ker(\bar{A}))$. Therefore
    $$\mathrm{rank}(\ker(A)) = \mathrm{rank}(M/\mm) \leq \mathrm{rank}(\ker(\bar{A})).$$
    Hence the generators of $\ker(A)/(p) \subseteq \ker(\bar{A})$ lift to generate $\ker(A)$.
\end{proof}

This result suggests a strategy for computing invariants of $(\ZZ_{p^m})^k$: reduce the entries of the weight matrix mod-$p$, use our algorithm, and then lift the invariants back into $\ZZ_{p^m}$. We currently do not have an efficient lifting algorithm which would be necessary to be able to implement this strategy.

\section{Implementation}~\label{sec:implementation}

The algorithms described above have been implemented in the computer algebra system \href{https://macaulay2.com/}{\texttt{Macaulay2}} \cite{m2}. We use \texttt{Macaulay2} as this is an open source, community-based, and powerful computer algebra software designed for research in algebraic geometry and commutative algebra, capable of efficiently computing with ideals, modules, and Gröbner bases. Its built-in algebraic objects make it especially convenient for exploring polynomial rings.

The algorithms are implemented with the new \texttt{Elementary} strategy in the package \texttt{InvariantRing} \cite{invm2}, which is used for an elementary abelian group whose weight matrix is full rank.  We include the pseudocode for the algorithms in the subsection Pseudocode \ref{code}. We tested the new strategy against the one available for finite abelian groups (\texttt{DerksenGandini}) for weight matrices of sizes $2 \times 3$, $2 \times 4$, $2 \times 5$ and primes up to 29 (as the older strategy already takes nearly an hour at that point). The new strategy performs faster than the older one, see Table \ref{fast}, Table \ref{faster}, Table \ref{fastest}. All computations were performed on a Dell XPS laptop with an Intel Core i9-12900HK processor running Macaulay2 1.26.05 on Fedora Linux 43 and version 2.4 of the InvariantRing package

\begin{table}[H]
\centering
\begin{tabular}{@{}|l|l|l|@{}}
\toprule
\rowcolor[HTML]{FFFFFF} 
{\color[HTML]{333333} $p$} & {\color[HTML]{333333} DerksenGandini (seconds)} & {\color[HTML]{333333} Elementary (seconds)} \\ \midrule
2                          & 0.0188904                                       & 0.00240308                                  \\ \midrule
3                          & 0.0339419                                       & 0.00225972                                  \\ \midrule
5                          & 0.0871715                                       & 0.00187862                                  \\ \midrule
7                          & 0.240657                                        & 0.0017983                                   \\ \midrule
11                         & 0.930497                                        & 0.00207469                                  \\ \midrule
13                         & 1.4655                                          & 0.0020195                                   \\ \midrule
17                         & 3.67577                                         & 0.0023383                                   \\ \midrule
19                         & 5.64084                                         & 0.00334087                                  \\ \midrule
23                         & 11.021                                          & 0.0032336                                   \\ \midrule
29                         & 25.6113                                         & 0.00369017                                  \\ \bottomrule
\end{tabular}
\caption{Run times for $2\times 3$ weight matrix for primes up to 29.} \label{fast}
\end{table}

\begin{table}[H]
\centering
\begin{tabular}{@{}|l|l|l|@{}}
\toprule
\rowcolor[HTML]{FFFFFF} 
{\color[HTML]{333333} $p$} & {\color[HTML]{333333} DerksenGandini (seconds)} & {\color[HTML]{333333} Elementary (seconds)} \\ \midrule
2  & 0.0194246               & 0.00251401          \\ \midrule
3  & 0.0403208               & 0.00318608          \\ \midrule
5  & 0.165486                & 0.00267351          \\ \midrule
7  & 0.664042                & 0.00343511          \\ \midrule
11 & 5.85124                 & 0.00497864          \\ \midrule
13 & 14.5446                 & 0.00602303          \\ \midrule
17 & 76.129                  & 0.00924483          \\ \midrule
19 & 160.291                 & 0.0122608           \\ \midrule
23 & 554.185                 & 0.0147222           \\ \midrule
29 & 2468.83                 & 0.0203569           \\ \bottomrule
\end{tabular}
\caption{Run times for $2\times 4$ weight matrix for primes up to 29.} \label{faster}
\end{table}

\begin{table}[H]
\centering
\begin{tabular}{@{}|l|l|l|@{}}
\toprule
\rowcolor[HTML]{FFFFFF} 
{\color[HTML]{333333} $p$} & {\color[HTML]{333333} DerksenGandini (seconds)} & {\color[HTML]{333333} Elementary (seconds)} \\ \midrule 
2  & 0.0195961               & 0.00266178          \\ \midrule
3  & 0.0431                  & 0.0032897           \\ \midrule
5  & 0.264658                & 0.00676409          \\ \midrule
7  & 0.817787                & 0.0141474           \\ \midrule
11 & 7.40832                 & 0.0454255           \\ \midrule
13 & 20.6554                 & 0.0770973           \\ \midrule
17 & 97.6529                 & 0.265525            \\ \midrule
19 & 211.821                 & 0.239991            \\ \midrule
23 & 733.398                 & 0.531622            \\ \midrule
29 & 3410.04                 & 1.12467             \\ \bottomrule
\end{tabular} 
\caption{Run times for $2\times 5$ weight matrix for primes up to 29.} 
\label{fastest}
\end{table}

\subsection{Pseudocode}~\label{code}

\newcommand{\n}{\vspace{0.1cm}\\}
\newcommand{\en}{\vspace{0.2cm}\\}
\newcommand{\vn}{\vspace{-0.2cm}\\}
\newcommand{\tn}[1]{\textnormal{#1}}
\newcommand{\tb}[1]{\textnormal{\textbf{#1}}}
\newcommand{\ceq}{\;\tn{\texttt{==}}\;}
\newcommand{\cneq}{\;\tn{\texttt{!=}}\;}
\newcommand{\code}[1]{\tn{\texttt{#1}}}

\subsubsection{Seed generation}
The pseudocode in Algorithm \ref{alg1} is a simplification of the seed generation algorithm implemented in \texttt{Macaulay2} (M2).

\begin{algorithm}[h!]
    \SetAlgoLined
    \KwData{\\
        $\quad R$, a polynomial ring\\
        $\quad W$, a weight matrix with $n$ rows\\
        $\quad p$, the order of the group, $\mathbb Z$\\
    }
    \KwResult{\\
        $\quad L$, a list of the polynomial seed invariants\vspace{0.3cm}\\
    }

    \DontPrintSemicolon
    $w=\begin{bmatrix} \,\,\end{bmatrix}$ \tcc*{Set $w$ to an empty matrix}
    \ForEach {$n\times n$ \tn{submatrix} $s\in W$}{
        \If {$\code{Det}(s)\cneq 0$}{
            $w = s$\;
            \tb{break}\tcc*{Breaking ends the for loop}
        }
    }
    \BlankLine
    $\tb{a} = \{\,\}$\tcc*{creates list to store vectors}
    \ForEach{$\vec{v}\in W$ \tn{s.t.} $\vec{v}\notin w$}{
        $Q = \{\,\}$\tcc*{empty list to store vector results}
        $j = 1$\;
        \BlankLine
        \ForEach{$\vec x \in [\vec v \,\,\,\, w]$}{
            $Q$.\texttt{append}($j \cdot e_{\vec v} \cdot \texttt{Det}(\,[\vec v \,\,\,\, w ]\,\backslash \,\vec x\,$))\;
            $j=j\cdot -1$
        }
        \BlankLine
        $\tb{a}$.\texttt{append}($\sum Q$)
    }
    \BlankLine
    \tb{return }\texttt{toPolynomial(\tb{a}, $p$, $R$)}  
    \caption{Seed generation} \label{alg1}
\end{algorithm}

\subsubsection{Seed growth}
The pseudocode in Algorithm \ref{alg2} is a simplification of the seed growth algorithm implemented in \texttt{Macaulay2} (M2).

\begin{algorithm}[h!]
 \caption{Seed growth} \label{alg2}
    \SetAlgoLined
    \KwData{\\
        $\quad L$, a list of polynomial seed invariants\\
        $\quad R$, the ring of the polynomial invariants\\
        $\quad p$, the order of the group, $\mathbb Z$\\
    }
    \KwResult{\\
        $\quad M$, a list of all generating invariants\vspace{0.3cm}\\
    }
    \BlankLine\DontPrintSemicolon
    $M = \{\,\}$\tcc*{an empty list}
      \BlankLine
      
    $t$ = length $L$ \tcc*{number of seeds}
    
      \BlankLine
      
    \ForEach{$c \in \{(c_1, \ldots , c_t) \mid c_i \in \{1, \ldots, p-1 \}\}$}{
    
      \BlankLine
        $m' =  \sum c_i s_i \mod p$ \tcc*{combinations of seeds}
        \BlankLine
        \If{$(m')_j \leq (g)_j \, , \forall j , \, \forall  g \in L \cup M $} {
         \BlankLine
         \tcc*{If not divisible by gens so far}
          \If{$\sum_j (m')_j \leq k(p-1)+1$ } {
                \BlankLine
         \tcc*{or above Olson's bound}
        
         \BlankLine
                $M$.\code{append($m'$)}
                 \BlankLine
          \tcc*{add it as a generator}
            }
        }
       }
    \BlankLine
     $M$.\code{append($L$)}
         \BlankLine
    \ForEach{\tn{var }$x \in \code{gens(}R\code)$}{
        $M$.\code{append($x^p$)}
    }
    \BlankLine
    \tb{return }\code{reduce($M$)}
\end{algorithm}

\end{document}